\newcommand{\Mod}[1]{\ (\textup{mod}\ #1)}
\theoremstyle{plain} 
\newtheorem{theorem}{\indent\sc Theorem}[section]
\newtheorem{lemma}[theorem]{\indent\sc Lemma}
\newtheorem{corollary}[theorem]{\indent\sc Corollary}
\newtheorem{proposition}[theorem]{\indent\sc Proposition}
\theoremstyle{definition} 
\newtheorem{remark}[theorem]{\indent\sc Remark}
\def\address#1#2{\begingroup
\noindent\parbox[t]{7.8cm}{%
\small{\scshape\ignorespaces#1}\par\vskip1ex
\noindent\small{\itshape E-mail address}%
\/: #2\par\vskip4ex}\hfill%
\endgroup}%
\title{Determination of Fricke families} 
\author{
\textsc{Ick Sun Eum and Dong Hwa Shin$^*$} 
}
\date{} 
\begin{document}

\allowdisplaybreaks

\maketitle

\footnote{ 
2010 \textit{Mathematics Subject Classification}. Primary 11F03, Secondary 11F11.}
\footnote{ 
\textit{Key words and phrases}. Fricke families, modular functions.}
\footnote{
\thanks{$^*$The corresponding author was supported by Hankuk University of Foreign Studies Research Fund of 2015.} }

\begin{abstract}
For a positive integer $N$ divisible by $4$, let
$\mathcal{O}^1_N(\mathbb{Q})$ be
the ring of weakly holomorphic modular functions for the congruence subgroup $\Gamma^1(N)$
with rational Fourier coefficients.
We construct explicit generators of $\mathcal{O}^1_N(\mathbb{Q})$ over $\mathbb{Q}$, from
which we classify all Fricke families of level $N$.
\end{abstract}

\section {Introduction}

For a positive integer $N$, let $\mathcal{F}_N$ be the field of meromorphic modular functions
for the principal congruence subgroup
$\Gamma(N)=\left\{\gamma\in\mathrm{SL}_2(\mathbb{Z})~|~
\gamma\equiv I_2\Mod{N}\right\}$
whose Fourier coefficients belong to the $N$th
cyclotomic field $\mathbb{Q}(\zeta_N)$ where $\zeta_N=e^{2\pi i/N}$. It is well known that $\mathcal{F}_1$ is generated over $\mathbb{Q}$ by
the elliptic modular function $j(\tau)$ ($\tau\in\mathbb{H}$, the complex upper half-plane),
and $\mathcal{F}_N$ is a Galois extension of $\mathcal{F}_1$ with
\begin{equation}\label{Gal}
\mathrm{Gal}(\mathcal{F}_N/\mathcal{F}_1)\simeq\mathrm{GL}_2(\mathbb{Z}/N\mathbb{Z})/\{\pm I_2\}
\end{equation}
(see $\S$2).
For $N\geq2$ we let
\begin{equation*}
\mathcal{V}_N=\{\mathbf{v}\in\mathbb{Q}^2~|~\mathbf{v}~\textrm{has primitive denominator $N$}\}.
\end{equation*}
We call a family $\{h_\mathbf{v}(\tau)\}_{\mathbf{v}\in\mathcal{V}_N}$
of functions in $\mathcal{F}_N$ a \textit{Fricke family} of level $N$ if it satisfies the
following three conditions:
\begin{itemize}
\item[(F1)] Each $h_\mathbf{v}(\tau)$ is weakly holomorphic (that is, holomorphic on $\mathbb{H}$).
\item[(F2)] $h_\mathbf{v}(\tau)$ depends only on $\pm\mathbf{v}\Mod{\mathbb{Z}^2}$.
\item[(F3)] $h_\mathbf{v}(\tau)^\gamma=h_{{^t}\gamma\mathbf{v}}(\tau)$
for all $\gamma\in\mathrm{GL}_2(\mathbb{Z}/N\mathbb{Z})/\{\pm I_2\}$, where
${^t}\gamma$ means the transpose of $\gamma$.
\end{itemize}
There are two important kinds of Fricke families $\{f_\mathbf{v}(\tau)\}_\mathbf{v}$ and
$\{g_\mathbf{v}(\tau)^{12N}\}_\mathbf{v}$, one consisting of Fricke functions and
the other consisting of $12N$th powers of Siegel functions (see $\S$\ref{sectwo}).
They are building blocks of fields of modular functions and groups of modular units
(\cite[Chapter 2]{K-L} and \cite[Chapter 6]{Lang}). Furthermore,
their special values at imaginary quadratic arguments generate class fields over the corresponding imaginary quadratic fields (\cite{J-K-S}, \cite{J-K-S2} and \cite[Chapter 10]{Lang}).
\par
In this paper we shall classify all Fricke families of level $N$ when $N\equiv0\Mod{4}$ (Theorems \ref{isomorphic}, \ref{integral} and Corollary \ref{expression}). Moreover, if we weaken the condition (F1) as
\begin{equation*}
\textrm{(F1$^\prime$) every $h_\mathbf{v}(\tau)$ is holomorphic on $\mathbb{H}$ except for the set}~ \{\gamma(\zeta_3),\gamma(\zeta_4)~|~\gamma\in\mathrm{SL}_2(\mathbb{Z})\},
\end{equation*}
then we can also determine all families $\{h_\mathbf{v}(\tau)\}_{\mathbf{v}\in\mathcal{V}_N}$
of functions in $\mathcal{F}_N$
satisfying (F1$^\prime$), (F2) and (F3) for arbitrary level $N\geq1$ (Theorem \ref{weaken}
and Remark \ref{weakenremark}).

\section {Galois actions on functions}

In this section we shall briefly describe the actions of the group $\mathrm{GL}_2(\mathbb{Z}/N\mathbb{Z})/\{\pm I_2\}\simeq
\mathrm{Gal}(\mathcal{F}_N/\mathcal{F}_1)$ on the field $\mathcal{F}_N$
based on \cite[$\S$6.1--6.2]{Shimura}.
\par
For a positive integer $N$, $\mathrm{GL}_2(\mathbb{Z}/N\mathbb{Z})/\{\pm I_2\}$ has a unique decomposition
\begin{equation*}
G_N\cdot
\mathrm{SL}_2(\mathbb{Z}/N\mathbb{Z})/\{\pm I_2\}\quad\textrm{with}\quad
G_N=\left\{\left[\begin{matrix}1&0\\0&d\end{matrix}\right]~|~
d\in(\mathbb{Z}/N\mathbb{Z})^\times\right\}.
\end{equation*}
This group acts on the field $\mathcal{F}_N$ as follows (\cite[$\S$6.1--6.2]{Shimura}): Let
\begin{equation*}
h(\tau)=\sum_{n\gg-\infty}c_nq^{n/N}\in\mathcal{F}_N\quad(c_n\in\mathbb{Q}(\zeta_N),~q=e^{2\pi i\tau}).
\end{equation*}
\begin{itemize}
\item[(A1)] The matrix $\left[\begin{matrix}1&0\\0&d\end{matrix}\right]\in G_N$
acts on $h(\tau)$ as
\begin{equation*}
h(\tau)^{\left[\begin{smallmatrix}1&0\\0&d\end{smallmatrix}\right]}=
\sum_{n\gg-\infty}c_n^{\sigma_d}q^{n/N},
\end{equation*}
where $\sigma_d$ is the automorphism of $\mathbb{Q}(\zeta_N)$
given by $\zeta_N^{\sigma_d}=\zeta_N^d$.
\item[(A2)] The matrix $\gamma\in\mathrm{SL}_2(\mathbb{Z}/N\mathbb{Z})/\{\pm I_2\}$
acts on $h(\tau)$ as
\begin{equation*}
h(\tau)^\gamma=(h\circ\widetilde{\gamma})(\tau),
\end{equation*}
where $\widetilde{\gamma}$ is any preimage of the reduction
$\mathrm{SL}_2(\mathbb{Z})\rightarrow\mathrm{SL}_2(\mathbb{Z}/N\mathbb{Z})/\{\pm I_2\}$.
\end{itemize}

\begin{lemma}\label{transitive}
Let $\{h_\mathbf{v}(\tau)\}_{\mathbf{v}\in\mathcal{V}_N}$ be a Fricke family of level $N\geq2$.
Then, $\mathrm{GL}_2(\mathbb{Z}/N\mathbb{Z})/\{\pm I_2\}$ acts
on $\{h_\mathbf{v}(\tau)\}_\mathbf{v}$ transitively.
\end{lemma}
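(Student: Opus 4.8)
The plan is to reduce the statement about the functions $h_\mathbf{v}(\tau)$ to the transitivity of the natural action of $\mathrm{GL}_2(\mathbb{Z}/N\mathbb{Z})$ on the primitive vectors of $(\mathbb{Z}/N\mathbb{Z})^2$. By condition (F2) each $h_\mathbf{v}(\tau)$ depends only on the image of $\mathbf{v}$ in $\frac1N\mathbb{Z}^2/\mathbb{Z}^2\cong(\mathbb{Z}/N\mathbb{Z})^2$, taken up to sign; writing $\mathbf{v}=\frac1N\,{}^t(a,b)$ with $\gcd(a,b,N)=1$, this image is the primitive vector $\overline{\mathbf{v}}={}^t(a,b)\bmod N$. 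Condition (F3) says that $\gamma$ sends $h_\mathbf{v}(\tau)$ to $h_{{}^t\gamma\mathbf{v}}(\tau)$, which is again a member of the family, and on primitive vectors this reads $\overline{\mathbf{v}}\mapsto{}^t\gamma\,\overline{\mathbf{v}}$, a well-defined action of $\mathrm{GL}_2(\mathbb{Z}/N\mathbb{Z})/\{\pm I_2\}$ since $-I_2$ fixes every $\pm$-class. Hence it suffices to show that $\mathrm{GL}_2(\mathbb{Z}/N\mathbb{Z})$ acts transitively on the set of primitive vectors of $(\mathbb{Z}/N\mathbb{Z})^2$: indeed $\gamma\mapsto{}^t\gamma$ is a bijection of $\mathrm{GL}_2(\mathbb{Z}/N\mathbb{Z})$, so solving ${}^t\gamma\,\overline{\mathbf{v}}=\overline{\mathbf{w}}$ for $\gamma$ is the same as solving $\delta\,\overline{\mathbf{v}}=\overline{\mathbf{w}}$ for $\delta$, and transitivity on the index set clearly implies transitivity on the (possibly smaller) set $\{h_\mathbf{v}(\tau)\}_\mathbf{v}$ of functions.

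For the transitivity I would move an arbitrary primitive vector $\overline{\mathbf{v}}={}^t(\overline a,\overline b)$ to ${}^t(\overline 1,\overline 0)$ by elements of $\mathrm{SL}_2(\mathbb{Z}/N\mathbb{Z})$. First one makes a coordinate a unit: for every prime $p\mid N$ with $p\mid b$ we automatically have $p\nmid a$ by primitivity, whereas for every prime $p\mid N$ with $p\nmid b$ one may choose a residue of $t$ modulo $p$ avoiding $a+tb\equiv0\Mod p$; by the Chinese remainder theorem there is an integer $t$ with $\gcd(a+tb,N)=1$, so after applying $\left[\begin{smallmatrix}1&t\\0&1\end{smallmatrix}\right]$ we may assume $\overline a\in(\mathbb{Z}/N\mathbb{Z})^\times$. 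Then $\left[\begin{smallmatrix}a^{-1}&0\\-b&a\end{smallmatrix}\right]\in\mathrm{SL}_2(\mathbb{Z}/N\mathbb{Z})$ sends ${}^t(\overline a,\overline b)$ to ${}^t(\overline 1,\overline 0)$, which proves the claim (already for $\mathrm{SL}_2(\mathbb{Z}/N\mathbb{Z})$, hence a fortiori for the larger group in the lemma). Equivalently, one may simply invoke the surjectivity of $\mathrm{SL}_2(\mathbb{Z})\to\mathrm{SL}_2(\mathbb{Z}/N\mathbb{Z})$ together with the transitivity of $\mathrm{SL}_2(\mathbb{Z})$ on primitive column vectors of $\mathbb{Z}^2$, after lifting $\overline{\mathbf{v}}$ to a coprime integral pair.

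The only genuinely non-formal ingredient is this elementary step producing a coordinate that is a unit modulo $N$ (equivalently, a coprime integral lift of a primitive residue vector); everything else is bookkeeping with (F2), (F3), and the fact that transposition permutes $\mathrm{GL}_2(\mathbb{Z}/N\mathbb{Z})$. I do not expect the hypothesis $N\geq2$ to enter beyond guaranteeing that $\mathcal{V}_N$ is nonempty so that the assertion has content.
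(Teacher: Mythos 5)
Your proof is correct and takes essentially the same route as the paper's: both arguments come down to the fact that a primitive pair $(a,b)$ modulo $N$ can be completed to a matrix whose determinant is a unit modulo $N$, so that (F3) carries the base component $h_{\left[\begin{smallmatrix}1/N\\0\end{smallmatrix}\right]}(\tau)$ to an arbitrary $h_{\mathbf{v}}(\tau)$. The only difference is one of detail: you spell out the Chinese-remainder/completion step explicitly, whereas the paper simply writes ``take any $\gamma$ with $\det(\gamma)$ relatively prime to $N$'' and leaves that existence implicit.
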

\begin{proof}
Note by (F3) that $\mathrm{GL}_2(\mathbb{Z}/N\mathbb{Z})/\{\pm I_2\}$ acts
on the family $\{h_\mathbf{v}(\tau)\}_\mathbf{v}$.
Let $\mathbf{v}=\left[\begin{matrix}a/N\\b/N\end{matrix}\right]\in\mathcal{V}_N$, so $\gcd(a,b)$ is relatively prime to $N$.
If we take any
$\gamma=\left[\begin{matrix}a&b\\c&d\end{matrix}\right]\in
M_2(\mathbb{Z})$ such that $\det(\gamma)$ is relatively prime to $N$,
 then we find by (F3) that
\begin{equation*}
h_{\left[\begin{smallmatrix}1/N\\0\end{smallmatrix}\right]}(\tau)^\gamma=
h_{{^t}\gamma\left[\begin{smallmatrix}1/N\\0\end{smallmatrix}\right]}(\tau)=
h_{\left[\begin{smallmatrix}a/N\\b/N\end{smallmatrix}\right]}(\tau)=
h_\mathbf{v}(\tau).
\end{equation*}
This implies that $\mathrm{GL}_2(\mathbb{Z}/N\mathbb{Z})/\{\pm I_2\}$ acts
on $\{h_\mathbf{v}(\tau)\}_\mathbf{v}$ transitively.
\end{proof}

\begin{remark}
Roughly speaking, $\{h_\mathbf{v}(\tau)\}_\mathbf{v}$ is completely determined by its component
$h_{\left[\begin{smallmatrix}1/N\\0\end{smallmatrix}\right]}(\tau)$.
\end{remark}

\section {Fricke and Siegel functions}\label{sectwo}

For a lattice $\Lambda$ in $\mathbb{C}$, we let
\begin{equation*}
g_2(\Lambda)=60\sum_{\lambda\in\Lambda\setminus\{0\}}\frac{1}{\lambda^4},\quad
g_3(\Lambda)=140\sum_{\lambda\in\Lambda\setminus\{0\}}\frac{1}{\lambda^6}
\quad\textrm{and}\quad
\Delta(\Lambda)=g_2(\Lambda)^3-27g_3(\Lambda)^2.
\end{equation*}
The \textit{elliptic modular function} $j(\tau)$ is defined by
\begin{equation}\label{j}
j(\tau)=1728\frac{g_2(\tau)^3}{\Delta(\tau)}=1728\left(1+27\frac{g_3(\tau)^2}{\Delta(\tau)}\right)
\quad(\tau\in\mathbb{H}),
\end{equation}
where
$g_2(\tau)=g_2([\tau,1])$,
$g_3(\tau)=g_3([\tau,1])$ and
$\Delta(\tau)=\Delta([\tau,1])$.
This generates the ring of weakly holomorphic functions in $\mathcal{F}_1$
over $\mathbb{Q}$ (\cite[Theorem 2 in Chapter 5]{Lang}).
\par
The \textit{Weierstrass $\wp$-function} relative to $\Lambda$ is given by
\begin{equation*}
\wp(z;\Lambda)=
\frac{1}{z^2}+\sum_{\lambda\in\Lambda\setminus\{0\}}\left(\frac{1}{(z-\lambda)^2}-\frac{1}{\lambda^2}
\right)
\quad(z\in\mathbb{C}).
\end{equation*}
For each $\mathbf{v}=\left[\begin{matrix}v_1\\v_2
\end{matrix}\right]\in\mathbb{Q}^2\setminus\mathbb{Z}^2$, we define
the \textit{Fricke function}
$f_\mathbf{v}(\tau)$ by
\begin{equation}\label{Fricke}
f_\mathbf{v}(\tau)
=-2^73^5\frac{g_2(\tau)g_3(\tau)}{\Delta(\tau)}\wp_\mathbf{v}(\tau)
\quad(\tau\in\mathbb{H}),
\end{equation}
where $\wp_\mathbf{v}(\tau)=\wp(v_1\tau+v_2;[\tau,1])$.
\par
Furthermore, the \textit{Weierstrass $\sigma$-function} relative to $\Lambda$ is given by
\begin{equation*}
\sigma(z;\Lambda)=z\prod_{\lambda\in \Lambda\setminus\{0\}}\left(1-\frac{z}{\lambda}\right)
e^{z/\lambda+(1/2)(z/\lambda)^2}\quad(z\in\mathbb{C}).
\end{equation*}
Taking logarithmic derivative yields the \textit{Weierstrass
$\zeta$-function} as
\begin{equation*}
\zeta(z;\Lambda)=\frac{\sigma'(z;\Lambda)}{\sigma(z;\Lambda)}
=\frac{1}{z}+\sum_{\lambda\in
\Lambda\setminus\{0\}}\left(\frac{1}{z-\lambda}+\frac{1}{\lambda}+
\frac{z}{\lambda^2}\right)\quad(z\in\mathbb{C}).
\end{equation*}
Since $\zeta'(z;\Lambda)=-\wp(z;\Lambda)$ is periodic with respect to $\Lambda$, for each $\lambda\in\Lambda$ there is
a constant $\eta(\lambda;\Lambda)$ so that
\begin{equation*}
\zeta(z+\lambda;\Lambda)-\zeta(z;\Lambda)=\eta(\lambda;\Lambda)\quad(z\in\mathbb{C}).
\end{equation*}
For each $\mathbf{v}=\left[\begin{matrix}v_1\\v_2
\end{matrix}\right]\in\mathbb{Q}^2\setminus\mathbb{Z}^2$,
we then define
the \textit{Siegel function} $g_\mathbf{v}(\tau)$ by
\begin{equation}\label{Siegel}
g_\mathbf{v}(\tau)=
e^{-(v_1\eta(\tau;[\tau,1])+
v_2\eta(1;[\tau,1]))
(v_1\tau+v_2)/2}\sigma(v_1\tau+v_2;[\tau,1])\eta(\tau)^2
\quad(\tau\in\mathbb{H}),
\end{equation}
where
\begin{equation*}
\eta(\tau)=\sqrt{2\pi}\zeta_8q^{1/24}\prod_{n=1}^\infty
(1-q^n)\quad(\tau\in\mathbb{H})
\end{equation*}
is the \textit{Dedekind $\eta$-function} which is a $24$th root of $\Delta(\tau)$
(\cite[Theorem 5 in Chapter 18]{Lang}).
By the $q$-product expansion of the
Weierstrass $\sigma$-function
we get the expression
\begin{equation*}
g_{\mathbf{v}}(\tau)
=-e^{\pi i v_2(v_1-1)}
q^{(1/2)\mathbf{B}_2(v_1)}
(1-q^{v_1}e^{2\pi iv_2})
\prod_{n=1}^\infty (1-q^{n+v_1}e^{2\pi iv_2})
(1-q^{n-v_1}e^{-2\pi iv_2}),
\end{equation*}
where $\mathbf{B}_2(x)=x^2-x+1/6$
is the second Bernoulli polynomial
(\cite[Chapter 19, $\S$2]{Lang}). Note that
$g_\mathbf{v}(\tau)$ has neither zeros nor poles on $\mathbb{H}$.

\begin{proposition}\label{typical}
If $N\geq2$, then $\{f_\mathbf{v}(\tau)\}_{\mathbf{v}\in\mathcal{V}_N}$ and $\{g_\mathbf{v}(\tau)^{12N}\}_{\mathbf{v}\in\mathcal{V}_N}$ are Fricke families of level $N$.
\end{proposition}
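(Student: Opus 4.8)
The plan is to verify the three conditions (F1), (F2), (F3) for each of the two families; that all the functions involved lie in $\mathcal{F}_N$ is classical (\cite[$\S$6.1--6.2]{Shimura}, \cite[Chapter 6]{Lang}, \cite[Chapter 2]{K-L}). Conditions (F1) and (F2) follow at once from the analytic properties recalled in $\S$\ref{sectwo}. For $f_\mathbf{v}(\tau)$: the factor $g_2(\tau)g_3(\tau)/\Delta(\tau)$ is holomorphic on $\mathbb{H}$ since $\Delta$ has no zero there, and $\wp_\mathbf{v}(\tau)=\wp(v_1\tau+v_2;[\tau,1])$ is holomorphic on $\mathbb{H}$ precisely because $\mathbf{v}\notin\mathbb{Z}^2$; since $\wp(\,\cdot\,;\Lambda)$ is even and $\Lambda$-periodic we get $f_{-\mathbf{v}}(\tau)=f_\mathbf{v}(\tau)=f_{\mathbf{v}+\mathbf{n}}(\tau)$ for all $\mathbf{n}\in\mathbb{Z}^2$. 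For $g_\mathbf{v}(\tau)^{12N}$: condition (F1) holds because $g_\mathbf{v}(\tau)$ has neither zeros nor poles on $\mathbb{H}$ and its $q$-product shows it has finite order at each cusp, while (F2) follows from the oddness $g_{-\mathbf{v}}=-g_\mathbf{v}$ and a translation relation $g_{\mathbf{v}+\mathbf{n}}=(\textrm{root of unity})\cdot g_\mathbf{v}$, both of which disappear after raising to the $12N$-th power, since $12N$ is even and the translation root of unity is a $2N$-th root of unity (because $\mathbf{v}\in\frac{1}{N}\mathbb{Z}^2$), with $2N\mid 12N$.

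For (F3) I would use the decomposition $\mathrm{GL}_2(\mathbb{Z}/N\mathbb{Z})/\{\pm I_2\}=G_N\cdot\mathrm{SL}_2(\mathbb{Z}/N\mathbb{Z})/\{\pm I_2\}$ from $\S$2, so that it suffices to treat $\gamma\in\mathrm{SL}_2(\mathbb{Z}/N\mathbb{Z})/\{\pm I_2\}$ and $\gamma=\left[\begin{smallmatrix}1&0\\0&d\end{smallmatrix}\right]\in G_N$ separately. In the first case (A2) gives $h_\mathbf{v}(\tau)^\gamma=h_\mathbf{v}(\widetilde{\gamma}\tau)$ for $\widetilde{\gamma}=\left[\begin{smallmatrix}a&b\\c&d\end{smallmatrix}\right]\in\mathrm{SL}_2(\mathbb{Z})$; using $[\widetilde{\gamma}\tau,1]=(c\tau+d)^{-1}[\tau,1]$ and $v_1\widetilde{\gamma}\tau+v_2=(c\tau+d)^{-1}\big((av_1+cv_2)\tau+(bv_1+dv_2)\big)$ together with the degree $-2$ homogeneity of $\wp$ and the weights $4,6,12$ of $g_2,g_3,\Delta$, all the factors $(c\tau+d)$ cancel and one obtains $f_\mathbf{v}(\widetilde{\gamma}\tau)=f_{{}^t\gamma\mathbf{v}}(\tau)$. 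For the Siegel functions the analogous transformation formula gives $g_\mathbf{v}(\widetilde{\gamma}\tau)=\varepsilon\cdot g_{{}^t\gamma\mathbf{v}}(\tau)$ with $\varepsilon$ a root of unity coming from the transformation of $\eta(\tau)^2$ and of the $\eta(\,\cdot\,;[\tau,1])$-terms; since $\varepsilon^{12}=1$ (\cite[Chapter 2]{K-L}) the $12N$-th power removes it, giving $g_\mathbf{v}(\widetilde{\gamma}\tau)^{12N}=g_{{}^t\gamma\mathbf{v}}(\tau)^{12N}$.

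In the second case ${}^t\gamma\mathbf{v}=\left[\begin{smallmatrix}v_1\\dv_2\end{smallmatrix}\right]$, and one reads (A1) off the $q$-expansions. Writing $\mathbf{v}=\left[\begin{smallmatrix}a/N\\b/N\end{smallmatrix}\right]$, the $q$-product for $g_\mathbf{v}(\tau)$ shows that, after raising to the $12N$-th power, every occurrence of $e^{2\pi iv_2}=\zeta_N^b$ becomes a power of $\zeta_N$, the prefactor $(-e^{\pi iv_2(v_1-1)})^{12N}$ reduces to a power of $\zeta_N$ depending only on $ab$, and the fractional power $q^{(12N/2)\mathbf{B}_2(v_1)}$ involves only $v_1$; hence applying $\sigma_d$ to the $q$-expansion of $g_\mathbf{v}(\tau)^{12N}$ — that is, sending $\zeta_N\mapsto\zeta_N^d$ — yields exactly the $q$-expansion of $g_{\left[\begin{smallmatrix}v_1\\dv_2\end{smallmatrix}\right]}(\tau)^{12N}$, which is (F3). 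The corresponding statement for $f_\mathbf{v}(\tau)$ is the classical description of the Galois action on Fricke functions and can simply be quoted (\cite[$\S$6.1--6.2]{Shimura}, \cite[Chapter 6]{Lang}). The only step requiring genuine care is the bookkeeping of roots of unity and automorphy factors, and in particular confirming that the exponent $12N$ is precisely what is needed to trivialise both the $\mathrm{SL}_2(\mathbb{Z})$-multiplier and the $\mathbb{Z}^2$-translation root of unity of the Siegel functions; the rest is a routine application of the standard transformation formulas.
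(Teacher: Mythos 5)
Your proposal is correct, but it is worth noting that the paper gives no argument at all for this proposition: its entire ``proof'' is the citation of \cite[Chapter 6, \S 2--3]{Lang} and \cite[Proposition 1.3 in Chapter 2]{K-L}. What you have written is essentially a self-contained reconstruction of the material those references contain: the holomorphy and parity/periodicity of $\wp$ give (F1)--(F2) for the Fricke functions; the homogeneity bookkeeping $(c\tau+d)^{4+6-12+2}=1$ gives the $\mathrm{SL}_2$ part of (F3); and for the Siegel functions the two separate roots of unity (the $\mathbb{Z}^2$-translation factor, a $2N$th root of unity for $\mathbf{v}\in\frac{1}{N}\mathbb{Z}^2$, and the $\mathrm{SL}_2(\mathbb{Z})$-multiplier, a $12$th root of unity as in Lemma \ref{difference}(iii)) are both killed by the exponent $12N$ --- this last point is exactly the reason the exponent $12N$ appears, and you identify it correctly. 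Your use of the decomposition $G_N\cdot\mathrm{SL}_2(\mathbb{Z}/N\mathbb{Z})/\{\pm I_2\}$ with (A1) acting on $q$-expansions matches how the paper sets up the Galois action in \S 2. The only place your argument still leans on the literature in the same way the paper does is the $G_N$-action on the Fricke functions (the classical computation of the Fourier coefficients of $f_\mathbf{v}$ in $\mathbb{Q}(\zeta_N)$ from \cite[\S 6.1--6.2]{Shimura}); that is a reasonable thing to quote. In short: the paper outsources the whole proof, and you have correctly filled it in; what your version buys is an explicit confirmation that $12N$ is the right exponent, at the cost of some routine but unavoidable root-of-unity bookkeeping.
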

\begin{proof}
See \cite[Chapter 6, $\S$2--3]{Lang} and \cite[Proposition 1.3 in Chapter 2]{K-L}.
\end{proof}

\begin{remark}
We call a function $h(\tau)$ in $\mathcal{F}_N$ a \textit{modular unit} of level $N\geq1$
if both $h(\tau)$ and $h(\tau)^{-1}$ are integral over $\mathbb{Q}[j(\tau)]$.
As is well known, $h(\tau)$ is a modular unit
if and only if it has neither zeros nor poles on $\mathbb{H}$ (\cite[p. 36]{K-L} or \cite[Proposition 2.3]{E-K-S}).
Thus $g_\mathbf{v}(\tau)^{12N}$ is a modular unit of level $N$ for every $\mathbf{v}\in\mathcal{V}_N$ with $N\geq2$. Moreover, $g_\mathbf{v}(\tau)$ is
a modular unit of level $12N^2$ (\cite[Theorems 5.2 and 5.3 in Chapter 3]{K-L}).
\end{remark}

For later use, we need the following lemma.

\begin{lemma}\label{difference}
Let $\mathbf{u},\mathbf{v}\in\mathbb{Q}^2\setminus\mathbb{Z}^2$.
\begin{itemize}
\item[\textup{(i)}] We have the assertion that $f_\mathbf{u}(\tau)=f_\mathbf{v}(\tau)$
if and only if $\mathbf{u}\equiv\pm\mathbf{v}\Mod{\mathbb{Z}^2}$.
\item[\textup{(ii)}] If $\mathbf{u}\not\equiv\pm\mathbf{v}\Mod{\mathbb{Z}^2}$, then we get the relation
\begin{equation*}
(f_\mathbf{u}(\tau)-f_\mathbf{v}(\tau))^6=
2^{12}3^6j(\tau)^2(j(\tau)-1728)^3
\frac{g_{\mathbf{u}+\mathbf{v}}(\tau)^6
g_{\mathbf{u}-\mathbf{v}}(\tau)^6}{g_\mathbf{u}(\tau)^{12}g_\mathbf{v}(\tau)^{12}}.
\end{equation*}
\item[\textup{(iii)}]
For each $\gamma\in\mathrm{SL}_2(\mathbb{Z})$, we get
$(g_\mathbf{v}\circ\gamma)(\tau)=\zeta g_{{^t}\gamma\mathbf{v}}(\tau)$ for some
$12$th root of unity $\zeta$ depending only on $\gamma$.
\end{itemize}
\end{lemma}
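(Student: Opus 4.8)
The plan is to treat the three parts in order, each reducing to a classical identity for the Weierstrass functions together with some careful bookkeeping. For \textup{(i)} I would note that, for fixed $\tau$, the map $z\mapsto\wp(z;[\tau,1])$ is an even elliptic function of order $2$, so $\wp(z;[\tau,1])=\wp(w;[\tau,1])$ holds exactly when $z\equiv\pm w\Mod{[\tau,1]}$. The ``if'' direction of \textup{(i)} is then immediate from \eqref{Fricke}. For ``only if'', since $g_2(\tau)g_3(\tau)/\Delta(\tau)$ is a nonzero meromorphic function on $\mathbb{H}$, the equality $f_\mathbf{u}=f_\mathbf{v}$ forces $\wp_\mathbf{u}(\tau)=\wp_\mathbf{v}(\tau)$ off a discrete set, hence everywhere by analytic continuation; evaluating at a single transcendental $\tau_0$ and separating the part that is $\mathbb{Q}$-linear in $\tau_0$ from the constant part then yields $\mathbf{u}\equiv\pm\mathbf{v}\Mod{\mathbb{Z}^2}$.

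For \textup{(ii)}, the key input will be the classical difference formula
\[
\wp(z;\Lambda)-\wp(w;\Lambda)=-\frac{\sigma(z+w;\Lambda)\,\sigma(z-w;\Lambda)}{\sigma(z;\Lambda)^{2}\,\sigma(w;\Lambda)^{2}},
\]
which I would apply with $\Lambda=[\tau,1]$, $z=u_1\tau+u_2$, $w=v_1\tau+v_2$; here $z\pm w$ corresponds to $\mathbf{u}\pm\mathbf{v}$, and $\mathbf{u}+\mathbf{v},\,\mathbf{u}-\mathbf{v}\notin\mathbb{Z}^2$ by hypothesis, so $g_{\mathbf{u}+\mathbf{v}}(\tau)$ and $g_{\mathbf{u}-\mathbf{v}}(\tau)$ are defined. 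Solving \eqref{Siegel} for $\sigma(v_1\tau+v_2;[\tau,1])$ and substituting it for each of the four $\sigma$-factors, I expect the Dedekind-$\eta$ contributions to amount to a net factor $\eta(\tau)^{4}$ and the exponential prefactors to collapse to
\[
\tfrac12\bigl(Q(\mathbf{u}+\mathbf{v})+Q(\mathbf{u}-\mathbf{v})-2Q(\mathbf{u})-2Q(\mathbf{v})\bigr),
\]
where $Q(\mathbf{w})=(w_1\eta(\tau;[\tau,1])+w_2\eta(1;[\tau,1]))(w_1\tau+w_2)$ is a quadratic form in $\mathbf{w}$, so that this exponent vanishes by the parallelogram law. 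By \eqref{Fricke} this should give
\[
f_\mathbf{u}(\tau)-f_\mathbf{v}(\tau)=2^{7}3^{5}\,\frac{g_2(\tau)g_3(\tau)}{\Delta(\tau)}\cdot\frac{g_{\mathbf{u}+\mathbf{v}}(\tau)\,g_{\mathbf{u}-\mathbf{v}}(\tau)}{g_\mathbf{u}(\tau)^{2}\,g_\mathbf{v}(\tau)^{2}}\,\eta(\tau)^{4};
\]
raising to the sixth power and using $\eta(\tau)^{24}=\Delta(\tau)$ together with $g_2(\tau)^{3}=\Delta(\tau)j(\tau)/1728$ and $g_3(\tau)^{2}=\Delta(\tau)(j(\tau)-1728)/(1728\cdot27)$ from \eqref{j} should reduce the $g_2,g_3,\Delta,\eta$ part to $j(\tau)^{2}(j(\tau)-1728)^{3}/(1728^{5}\cdot27^{3})$; since $1728^{5}\cdot27^{3}=2^{30}3^{24}$, the constant $2^{42}3^{30}$ coming from the sixth power assembles to exactly $2^{12}3^{6}$, which is the asserted identity.

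For \textup{(iii)}, writing $\gamma=\left[\begin{smallmatrix}a&b\\c&d\end{smallmatrix}\right]$ and $\mathbf{w}={}^{t}\gamma\mathbf{v}=\left[\begin{smallmatrix}av_1+cv_2\\bv_1+dv_2\end{smallmatrix}\right]$, I would use that $[\,a\tau+b,\,c\tau+d\,]=[\tau,1]$, hence $[\gamma\tau,1]=(c\tau+d)^{-1}[\tau,1]$ and $v_1(\gamma\tau)+v_2=(c\tau+d)^{-1}(w_1\tau+w_2)$. Combining the homogeneities $\sigma(\mu z;\mu\Lambda)=\mu\,\sigma(z;\Lambda)$ and $\zeta(\mu z;\mu\Lambda)=\mu^{-1}\zeta(z;\Lambda)$ (the latter giving $\eta(\mu\lambda;\mu\Lambda)=\mu^{-1}\eta(\lambda;\Lambda)$) with the additivity of $\eta(\,\cdot\,;\Lambda)$ in its lattice argument, a direct computation should show that $\sigma(v_1(\gamma\tau)+v_2;[\gamma\tau,1])=(c\tau+d)^{-1}\sigma(w_1\tau+w_2;[\tau,1])$ and that the exponent in \eqref{Siegel} for $g_\mathbf{v}(\gamma\tau)$ coincides with the one for $g_\mathbf{w}(\tau)$. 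The transformation law $\eta(\gamma\tau)=\varepsilon(\gamma)(c\tau+d)^{1/2}\eta(\tau)$ with $\varepsilon(\gamma)^{24}=1$ then contributes $\varepsilon(\gamma)^{2}(c\tau+d)$; the $(c\tau+d)$-factors cancel, leaving $g_\mathbf{v}(\gamma\tau)=\varepsilon(\gamma)^{2}\,g_{{}^{t}\gamma\mathbf{v}}(\tau)$, so $\zeta=\varepsilon(\gamma)^{2}$ is a $12$th root of unity depending only on $\gamma$.

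The main obstacle is not conceptual but the precise bookkeeping. In \textup{(ii)} one must check that the exponential prefactors cancel exactly---this is the only place the quadratic nature of $Q$ enters---and that the powers of $2$ and $3$ arising from $1728=2^{6}3^{3}$ combine to $2^{12}3^{6}$. In \textup{(iii)} the point is to verify that every $(c\tau+d)$-factor cancels, so that the surviving $\eta$-multiplier $\varepsilon(\gamma)^{2}$ is genuinely a constant $12$th root of unity independent of $\tau$.
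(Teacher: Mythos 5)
Your proposal is correct, but it is worth noting that the paper does not actually prove Lemma \ref{difference} at all: its ``proof'' consists of three citations --- Cox for (i), Lang's $\wp$--$\sigma$ difference formula (Theorem 2 in Chapter 18 of \cite{Lang}) for (ii), and Koo--Shin for (iii) --- combined with the definitions (\ref{j}), (\ref{Fricke}) and (\ref{Siegel}). What you have written is, in effect, the content of those references unpacked into a self-contained derivation, and the details check out. In (ii) your bookkeeping is exactly what the citation to Lang hides: the exponential prefactors cancel by the parallelogram law for the quadratic form $Q$, the $\eta$-powers combine to $\eta(\tau)^{4}$, and the constants do assemble correctly, since $(2^{7}3^{5})^{6}=2^{42}3^{30}$, $g_2^{6}g_3^{6}\eta^{24}/\Delta^{6}=j^{2}(j-1728)^{3}/(1728^{5}\cdot 27^{3})$ by (\ref{j}), and $1728^{5}\cdot 27^{3}=2^{30}3^{24}$, leaving $2^{12}3^{6}$. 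In (iii) your computation with the homogeneities of $\sigma$ and of the quasi-period map, together with $[\gamma\tau,1]=(c\tau+d)^{-1}[\tau,1]$, is the standard proof of the Siegel-function transformation law; the only point to be slightly careful about is that $\varepsilon(\gamma)$ itself depends on a branch of $(c\tau+d)^{1/2}$, but $\zeta=\varepsilon(\gamma)^{2}$ is unambiguous, which is all the lemma asserts. Two cosmetic remarks on (i): invoking a \emph{transcendental} $\tau_0$ is more than you need --- any $\tau_0\in\mathbb{H}$ is non-real, so $1$ and $\tau_0$ are already $\mathbb{Q}$-linearly independent and the coefficient comparison goes through; and one should say explicitly that the sign $\epsilon\in\{\pm1\}$ and the integers $m,n$ obtained at $\tau_0$ give $u_1=\epsilon v_1+m$, $u_2=\epsilon v_2+n$ with a common $\epsilon$, which is precisely $\mathbf{u}\equiv\pm\mathbf{v}\Mod{\mathbb{Z}^2}$. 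The trade-off is the usual one: the paper's citations are shorter and defer correctness to the literature, while your version makes the constants and root-of-unity ambiguities verifiable on the page.
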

\begin{proof}
(i) See \cite[Lemma 10.4]{Cox} and definition (\ref{Fricke}).\\
(ii) See \cite[Theorem 2 in Chapter 18]{Lang} and definitions (\ref{j}), (\ref{Fricke}) and (\ref{Siegel}).\\
(iii) See \cite[Proposition 2.4]{K-S}.
\end{proof}

\section {Rings of weakly holomorphic functions}

For an integer $N\geq2$, we denote by $\mathrm{Fr}_N$ the set of all Fricke families of level $N$.
Then, $\mathrm{Fr}_N$ becomes a ring under the operations
\begin{equation}\label{operations}
\begin{array}{ccc}
\{h_\mathbf{v}(\tau)\}_\mathbf{v}+
\{k_\mathbf{v}(\tau)\}_\mathbf{v}
&=&\{(h_\mathbf{v}+k_\mathbf{v})(\tau)\}_\mathbf{v},\\
\{h_\mathbf{v}(\tau)\}_\mathbf{v}\cdot
\{k_\mathbf{v}(\tau)\}_\mathbf{v}
&=&\{(h_\mathbf{v}k_\mathbf{v})(\tau)\}_\mathbf{v}.
\end{array}
\end{equation}
\par
For a positive integer $N$, let $\mathcal{F}^1_N(\mathbb{Q})$ be the field of
meromorphic modular functions for
the congruence subgroup
\begin{equation*}
\Gamma^1(N)=\left\{\gamma\in\mathrm{SL}_2(\mathbb{Z})~|~
\gamma\equiv\left[\begin{matrix}1&0\\
\mathrm{*}&1\end{matrix}\right]
\Mod{N}\right\}
\end{equation*}
with rational Fourier coefficients.
Furthermore, we let $\mathcal{O}^1_N(\mathbb{Q})$ be its subring
consisting of weakly holomorphic functions.

\begin{lemma}\label{special}
Let $\{h_\mathbf{v}(\tau)\}_\mathbf{v}\in\mathrm{Fr}_N$ with $N\geq2$.
Then, $h_{\left[\begin{smallmatrix}1/N\\0\end{smallmatrix}\right]}(\tau)$ belongs to $\mathcal{O}^1_N(\mathbb{Q})$.
\end{lemma}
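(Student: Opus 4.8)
The plan is to verify directly that $h:=h_{\left[\begin{smallmatrix}1/N\\0\end{smallmatrix}\right]}(\tau)$ satisfies the three requirements for membership in $\mathcal{O}^1_N(\mathbb{Q})$: it is weakly holomorphic, it is invariant under $\Gamma^1(N)$, and its $q$-expansion has rational coefficients. All three will follow from the Fricke family axioms (F1)--(F3) combined with the explicit descriptions (A1)--(A2) of the Galois action. Weak holomorphy is immediate: $h$ is a member of the Fricke family, hence $h\in\mathcal{F}_N$, so it is meromorphic at the cusps, while (F1) guarantees it is holomorphic on $\mathbb{H}$.

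For $\Gamma^1(N)$-invariance, I would fix $\gamma=\left[\begin{smallmatrix}a&b\\c&d\end{smallmatrix}\right]\in\Gamma^1(N)$, so $a\equiv1$ and $b\equiv0\Mod{N}$. Since $h$ has weight $0$, the matrix $-I_2$ acts trivially on it and so does $\Gamma(N)$ (because $h\in\mathcal{F}_N$); hence by (A2) the action of the class $\bar\gamma\in\mathrm{SL}_2(\mathbb{Z}/N\mathbb{Z})/\{\pm I_2\}$ on $h$ is literally $h\mapsto h\circ\gamma$. Now (F3) gives $h\circ\gamma=h_{\left[\begin{smallmatrix}1/N\\0\end{smallmatrix}\right]}(\tau)^{\bar\gamma}=h_{{^t}\gamma\left[\begin{smallmatrix}1/N\\0\end{smallmatrix}\right]}(\tau)$, and since ${^t}\gamma\left[\begin{smallmatrix}1/N\\0\end{smallmatrix}\right]=\left[\begin{smallmatrix}a/N\\b/N\end{smallmatrix}\right]\equiv\left[\begin{smallmatrix}1/N\\0\end{smallmatrix}\right]\Mod{\mathbb{Z}^2}$, property (F2) yields $h_{{^t}\gamma\left[\begin{smallmatrix}1/N\\0\end{smallmatrix}\right]}(\tau)=h$. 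Thus $h\circ\gamma=h$ for all $\gamma\in\Gamma^1(N)$, so $h$ is a modular function for $\Gamma^1(N)$.

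For rationality of the Fourier coefficients, write $h=\sum_{n\gg-\infty}c_nq^{n/N}$ with $c_n\in\mathbb{Q}(\zeta_N)$. For each $d\in(\mathbb{Z}/N\mathbb{Z})^\times$, the matrix $\left[\begin{smallmatrix}1&0\\0&d\end{smallmatrix}\right]\in G_N$ is its own transpose and fixes $\left[\begin{smallmatrix}1/N\\0\end{smallmatrix}\right]$, so (F3) gives $h^{\left[\begin{smallmatrix}1&0\\0&d\end{smallmatrix}\right]}=h$, while (A1) gives $h^{\left[\begin{smallmatrix}1&0\\0&d\end{smallmatrix}\right]}=\sum_n c_n^{\sigma_d}q^{n/N}$. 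Comparing coefficients forces $c_n^{\sigma_d}=c_n$ for every $d$, and since the $\sigma_d$ exhaust $\mathrm{Gal}(\mathbb{Q}(\zeta_N)/\mathbb{Q})$, each $c_n$ lies in $\mathbb{Q}$. Putting the three steps together shows $h\in\mathcal{O}^1_N(\mathbb{Q})$. I expect the only genuinely delicate point to be the bookkeeping in the second step, namely justifying that the abstract Galois action of $\bar\gamma\in\mathrm{SL}_2(\mathbb{Z}/N\mathbb{Z})/\{\pm I_2\}$ coincides with honest precomposition by a chosen lift $\gamma\in\Gamma^1(N)$; this rests on the fact that weight-$0$ functions factor through $\mathrm{PSL}_2$ and that $\mathcal{F}_N$ consists of $\Gamma(N)$-invariant functions, so the choice of lift is immaterial.
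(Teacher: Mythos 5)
Your proof is correct and follows essentially the same route as the paper's: use (F3) together with (F2) to show invariance under $\Gamma^1(N)$ and under $G_N$, deduce rationality of the Fourier coefficients from (A1), and cite (F1) for weak holomorphy. The extra care you take in identifying the abstract action of $\bar\gamma$ with precomposition by a lift, and in noting that the $\sigma_d$ exhaust $\mathrm{Gal}(\mathbb{Q}(\zeta_N)/\mathbb{Q})$, only makes explicit what the paper leaves implicit.
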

\begin{proof}
For any $\gamma\in\left[\begin{matrix}a&b\\c&d\end{matrix}\right]\in\Gamma^1(N)$,
we find that
\begin{eqnarray*}
h_{\left[\begin{smallmatrix}1/N\\0\end{smallmatrix}\right]}(\tau)^\gamma&=&
h_{{^t}\gamma\left[\begin{smallmatrix}1/N\\0\end{smallmatrix}\right]}(\tau)
\quad\textrm{by (F3)}\\
&=&
h_{\left[\begin{smallmatrix}a/N\\b/N\end{smallmatrix}\right]}(\tau)\\
&=&h_{\left[\begin{smallmatrix}1/N\\0\end{smallmatrix}\right]}(\tau)\quad
\textrm{by the fact $a\equiv1,~b\equiv0\Mod{N}$ and (F2)}.
\end{eqnarray*}
Thus $h_{\left[\begin{smallmatrix}1/N\\0\end{smallmatrix}\right]}(\tau)$ is modular for $\Gamma^1(N)$.
\par
Now, let $\alpha=\left[\begin{matrix}1&0\\0&d\end{matrix}\right]\in G_N$. We get
by (F3) and (F2) that
\begin{equation*}
h_{\left[\begin{smallmatrix}1/N\\0\end{smallmatrix}\right]}(\tau)^\alpha=
h_{{^t}\alpha\left[\begin{smallmatrix}1/N\\0\end{smallmatrix}\right]}(\tau)
=h_{\left[\begin{smallmatrix}1/N\\0\end{smallmatrix}\right]}(\tau),
\end{equation*}
which shows that $h_{\left[\begin{smallmatrix}1/N\\0\end{smallmatrix}\right]}(\tau)$ has
rational Fourier coefficients by (A1).
\par
Moreover, since $h_{\left[\begin{smallmatrix}1/N\\0\end{smallmatrix}\right]}(\tau)$ is weakly holomorphic by (F1),
it belongs to $\mathcal{O}^1_N(\mathbb{Q})$.
\end{proof}

Thus we obtain by Lemma \ref{special} a ring homomorphism
\begin{equation}\label{phiN}
\begin{array}{rccc}
\phi_N:&\mathrm{Fr}_N&\rightarrow&\mathcal{O}^1_N(\mathbb{Q})\\
&\{h_\mathbf{v}(\tau)\}_\mathbf{v}&\mapsto&h_{\left[\begin{smallmatrix}1/N\\0\end{smallmatrix}\right]}(\tau).
\end{array}
\end{equation}

\begin{lemma}\label{gamma}
Let $N\geq2$, and let $a$ and $b$ be a pair of integers such that $\gcd(a,b)$ is relatively prime to $N$. Let $\beta=\left[\begin{matrix}a&b\\c&d\end{matrix}\right]$
and $\beta'=\left[\begin{matrix}a&b\\c'&d'\end{matrix}\right]$ be matrices in $M_2(\mathbb{Z})$
such that $\det(\beta)\equiv\det(\beta')\equiv1\Mod{N}$. Then, there is a matrix $\alpha\in\Gamma^1(N)$ so that $\alpha\beta\equiv\beta'\Mod{N}$.
\end{lemma}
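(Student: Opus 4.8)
The plan is to write the matrix $\alpha$ down explicitly by solving $\alpha\beta\equiv\beta'\Mod{N}$ for $\alpha$. Put $\gamma_0=\left[\begin{smallmatrix}d&-b\\-c&a\end{smallmatrix}\right]\in M_2(\mathbb{Z})$, so that $\gamma_0\beta=\beta\gamma_0=\det(\beta)I_2\equiv I_2\Mod{N}$; thus $\gamma_0$ represents the inverse of $\beta$ modulo $N$. First I would compute, using the determinant congruences $ad-bc=\det(\beta)\equiv1$ and $ad'-bc'=\det(\beta')\equiv1\Mod{N}$,
\[
\beta'\gamma_0=\left[\begin{matrix}a&b\\c'&d'\end{matrix}\right]\left[\begin{matrix}d&-b\\-c&a\end{matrix}\right]=\left[\begin{matrix}ad-bc&0\\c'd-cd'&ad'-bc'\end{matrix}\right]\equiv\left[\begin{matrix}1&0\\c'd-cd'&1\end{matrix}\right]\Mod{N},
\]
so $\beta'\gamma_0$ is congruent modulo $N$ to a lower unitriangular integer matrix.

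Next I would simply set
\[
\alpha=\left[\begin{matrix}1&0\\c'd-cd'&1\end{matrix}\right]\in M_2(\mathbb{Z}).
\]
Then $\det(\alpha)=1$, so $\alpha\in\mathrm{SL}_2(\mathbb{Z})$, and $\alpha$ already has the lower unitriangular shape defining $\Gamma^1(N)$, so $\alpha\in\Gamma^1(N)$. Finally, from $\alpha\equiv\beta'\gamma_0\Mod{N}$ and $\gamma_0\beta\equiv I_2\Mod{N}$ we obtain
\[
\alpha\beta\equiv\beta'\gamma_0\beta\equiv\beta'\Mod{N},
\]
which is the required congruence. If one prefers to avoid $\gamma_0$ altogether, the congruence $\alpha\beta\equiv\beta'\Mod{N}$ can be checked entry by entry: its two nontrivial entries amount exactly to $c'(ad-1)\equiv c(ad'-1)$ and $d(bc'+1)\equiv d'(bc+1)\Mod{N}$, and both follow at once upon substituting $ad-1\equiv bc$ and $ad'-1\equiv bc'\Mod{N}$.

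Everything reduces to these two short matrix computations, so I do not foresee any genuine obstacle; the only thing requiring care is the bookkeeping in the products and remembering that the two determinant congruences are what makes it work. It is worth observing that the argument uses nothing beyond $\det(\beta)\equiv\det(\beta')\equiv1\Mod{N}$, and these already force $\gcd(a,b)$ to be prime to $N$, so the stated coprimality hypothesis is not an independent ingredient.
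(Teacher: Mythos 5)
Your proof is correct and arrives at exactly the same matrix $\alpha=\left[\begin{smallmatrix}1&0\\c'd-cd'&1\end{smallmatrix}\right]$ that the paper uses; deriving it via the adjugate $\gamma_0$ rather than verifying it directly is only a cosmetic difference. Your closing observation that the determinant congruence already forces $\gcd(a,b)$ to be prime to $N$ is also accurate.
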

\begin{proof}
Take
$\alpha=\left[\begin{matrix}1&0\\c'd-cd'&1\end{matrix}\right]\in\Gamma^1(N)$. One can readily show that
\begin{equation*}
\alpha\beta\equiv\left[\begin{matrix}a&b\\c'\det(\gamma)+c(-\det(\gamma')+1) &
d'\det(\gamma)+d(-\det(\gamma')+1)\end{matrix}\right]\equiv\beta'\Mod{N}
\end{equation*}
due to the fact $\det(\beta)\equiv\det(\beta')\equiv1\Mod{N}$.
\end{proof}

\begin{theorem}\label{isomorphic}
If $N\geq2$, then
$\mathrm{Fr}_N$ and $\mathcal{O}^1_N(\mathbb{Q})$ are isomorphic via the map $\phi_N$ given in \textup{(\ref{phiN})}.
\end{theorem}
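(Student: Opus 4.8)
The map $\phi_N$ of \textup{(\ref{phiN})} is already a ring homomorphism by Lemma \ref{special}, so the plan is simply to show that $\phi_N$ is both injective and surjective. Throughout I abbreviate $\mathbf{v}_0=\left[\begin{smallmatrix}1/N\\0\end{smallmatrix}\right]$, and I make free use of the fact (see (A1)--(A2)) that $\mathrm{GL}_2(\mathbb{Z}/N\mathbb{Z})/\{\pm I_2\}$ acts on $\mathcal{F}_N$ as a group on the right.

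For injectivity I would show that a Fricke family is completely determined by its value at $\mathbf{v}_0$. Suppose $\{h_\mathbf{v}(\tau)\}_\mathbf{v},\{k_\mathbf{v}(\tau)\}_\mathbf{v}\in\mathrm{Fr}_N$ satisfy $h_{\mathbf{v}_0}(\tau)=k_{\mathbf{v}_0}(\tau)$. For any $\mathbf{v}\in\mathcal{V}_N$, the computation in the proof of Lemma \ref{transitive} yields $\gamma\in\mathrm{GL}_2(\mathbb{Z}/N\mathbb{Z})/\{\pm I_2\}$ with ${}^t\gamma\mathbf{v}_0\equiv\mathbf{v}\pmod{\mathbb{Z}^2}$; then (F3) together with (F2) gives $h_\mathbf{v}=h_{\mathbf{v}_0}^\gamma=k_{\mathbf{v}_0}^\gamma=k_\mathbf{v}$. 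Hence the two families coincide, so $\phi_N$ is injective.

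For surjectivity, let $h(\tau)\in\mathcal{O}^1_N(\mathbb{Q})$; note $h(\tau)\in\mathcal{F}_N$ because $\Gamma(N)\subseteq\Gamma^1(N)$ and $\mathbb{Q}\subseteq\mathbb{Q}(\zeta_N)$. I would build a candidate family by transporting $h(\tau)$ around its Galois orbit: for $\mathbf{v}=\left[\begin{smallmatrix}a/N\\b/N\end{smallmatrix}\right]\in\mathcal{V}_N$ choose $\gamma\in M_2(\mathbb{Z})$ with $\det(\gamma)$ prime to $N$ and ${}^t\gamma\mathbf{v}_0\equiv\mathbf{v}\pmod{\mathbb{Z}^2}$ (possible since $\gcd(a,b)$ is prime to $N$), and set $h_\mathbf{v}(\tau)=h(\tau)^\gamma$. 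The crux of the whole theorem is that $h(\tau)^\gamma$ depends only on $\pm\mathbf{v}\pmod{\mathbb{Z}^2}$, not on the chosen representative. If $\gamma,\gamma'$ are two such matrices, then $\delta=\gamma'\gamma^{-1}$ satisfies ${}^t\delta\mathbf{v}_0\equiv\pm\mathbf{v}_0\pmod{\mathbb{Z}^2}$, which forces $\delta\equiv\left[\begin{smallmatrix}\pm1&0\\ *&*\end{smallmatrix}\right]\pmod N$; absorbing the sign into $-I_2$ (trivial in the quotient) and factoring $\left[\begin{smallmatrix}1&0\\ c&d\end{smallmatrix}\right]=\left[\begin{smallmatrix}1&0\\ c&1\end{smallmatrix}\right]\left[\begin{smallmatrix}1&0\\ 0&d\end{smallmatrix}\right]$ exhibits $\delta$ as the product of the reduction of an element of $\Gamma^1(N)$ with an element of $G_N$. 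Since $h(\tau)$ is modular for $\Gamma^1(N)$ it is fixed by the first factor via (A2), and since $h(\tau)$ has rational Fourier coefficients it is fixed by the second via (A1); hence $h(\tau)^\delta=h(\tau)$ and $h(\tau)^{\gamma'}=h(\tau)^\gamma$. This well-definedness is exactly (F2). Condition (F1) follows because passing from $h(\tau)$ to $h(\tau)^\gamma$ only conjugates Fourier coefficients and precomposes with an element of $\mathrm{SL}_2(\mathbb{Z})$, neither of which creates poles on $\mathbb{H}$. Finally (F3) is a one-line check: if $\mathbf{v}={}^t\gamma\mathbf{v}_0$ and $\gamma_0\in\mathrm{GL}_2(\mathbb{Z}/N\mathbb{Z})/\{\pm I_2\}$, then ${}^t\gamma_0\mathbf{v}={}^t(\gamma\gamma_0)\mathbf{v}_0$, so $h_{{}^t\gamma_0\mathbf{v}}=h(\tau)^{\gamma\gamma_0}=(h_\mathbf{v})^{\gamma_0}$. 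Since $\phi_N$ sends $\{h_\mathbf{v}(\tau)\}_\mathbf{v}$ to $h_{\mathbf{v}_0}=h(\tau)^{I_2}=h(\tau)$, the map $\phi_N$ is onto, and the theorem follows.

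I expect the only genuinely non-formal step to be the well-definedness argument: identifying the stabilizer of $\pm\mathbf{v}_0$ under the transpose action as $\overline{\Gamma^1(N)}\cdot G_N$, and matching the two invariance properties of $h(\tau)$ precisely with (A1) and (A2). Everything else---injectivity, (F1), (F3), and the identification of the image---is bookkeeping. Note that the hypothesis $N\equiv0\pmod4$ plays no role in this theorem; it enters only in the subsequent determination of $\mathcal{O}^1_N(\mathbb{Q})$ itself.
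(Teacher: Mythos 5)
Your argument is correct and follows essentially the same route as the paper: injectivity comes from the transitivity of the Galois action on the family (Lemma \ref{transitive}), and surjectivity from transporting $h(\tau)$ around its Galois orbit, with well-definedness reduced to the fact that the stabilizer of $\pm\left[\begin{smallmatrix}1/N\\0\end{smallmatrix}\right]$ is generated by the image of $\Gamma^1(N)$ together with $G_N$, both of which fix $h(\tau)$. The only substantive difference is that the paper normalizes the transporting matrix to have determinant $\equiv1\Mod{N}$ (using Lemma \ref{gamma} for well-definedness), so that the transport is purely by (A2) and preservation of weak holomorphy is immediate, whereas your version with general determinant prime to $N$ additionally needs the standard fact that the coefficient-conjugation action (A1) preserves weak holomorphy (e.g.\ because the weakly holomorphic functions in $\mathcal{F}_N$ are exactly those integral over $\mathbb{Q}[j(\tau)]$, a condition preserved by automorphisms fixing $\mathcal{F}_1$ pointwise).
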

\begin{proof}
Let $\{h_\mathbf{v}(\tau)\}_\mathbf{v}\in\mathrm{ker}(\phi)$,
so $\phi_N(\{h_\mathbf{v}(\tau)\}_\mathbf{v})=h_{\left[\begin{smallmatrix}1/N\\0\end{smallmatrix}\right]}(\tau)=0$. Then we get by Lemma \ref{transitive} that $h_\mathbf{v}(\tau)=0$ for all $\mathbf{v}\in\mathcal{V}_N$. This shows that $\phi_N$ is one-to-one.
\par
Now, let $h(\tau)\in\mathcal{O}^1_N(\mathbb{Q})$. For each
$\mathbf{v}=\left[\begin{matrix}a/N\\b/N\end{matrix}\right]\in\mathcal{V}_N$, we take
any $\beta=\left[\begin{matrix}a&b\\c&d\end{matrix}\right]\in M_2(\mathbb{Z})$ such that
$\det(\beta)\equiv1\Mod{N}$, and set $h_\mathbf{v}(\tau)=h(\tau)^\beta$. We claim that $h_\mathbf{v}(\tau)$ is well-defined, independent of the choice of $\beta$.
Indeed, if $\beta'=\left[\begin{matrix}a&b\\c'&d'\end{matrix}\right]$ is another matrix in $M_2(\mathbb{Z})$ such that $\det(\beta')\equiv1\Mod{N}$, then we see that
\begin{eqnarray*}
h(\tau)^{\beta'}&=&h(\tau)^{\alpha\beta}\quad\textrm{for some}~\alpha\in\Gamma^1(N)~
\textrm{by Lemma \ref{gamma} and (\ref{Gal})}\\
&=&h(\tau)^\beta\quad\textrm{since $h(\tau)$ is modular for $\Gamma^1(N)$}.
\end{eqnarray*}
Since $h(\tau)$ is weakly holomorphic, so is $h_\mathbf{v}(\tau)=h(\tau)^\beta$ by (A2).
Furthermore, $h_\mathbf{v}(\tau)$ depends only on $\pm\mathbf{v}\Mod{\mathbb{Z}^2}$ by (\ref{Gal}).
Let $\gamma=\left[\begin{matrix}x&y\\z&w\end{matrix}\right]\in\mathrm{GL}_2(\mathbb{Z}/N\mathbb{Z})/\{\pm I_2\}$. We derive by considering $\gamma=\left[\begin{matrix}a&b\\c&d\end{matrix}\right]$
as an element of $\mathrm{SL}_2(\mathbb{Z}/N\mathbb{Z})/\{\pm I_2\}$ that
\begin{eqnarray*}
h_\mathbf{v}(\tau)^\gamma&=&
\left(h(\tau)^{\left[\begin{smallmatrix}a&b\\c&d\end{smallmatrix}\right]}\right)
^{\left[\begin{smallmatrix}x&y\\z&w\end{smallmatrix}\right]}\\
&=&h(\tau)^{\left[\begin{smallmatrix}ax+bz&ay+bw\\cx+dz&cy+dw\end{smallmatrix}\right]}\\
&=&\left(h(\tau)^{\left[\begin{smallmatrix}1&0\\0&\det(\gamma)\end{smallmatrix}\right]}\right)^
{\left[\begin{smallmatrix}ax+bz&ay+bw\\\det(\gamma)^{-1}(cx+dz)&\det(\gamma)^{-1}(cy+dw)\end{smallmatrix}\right]}\\
&=&h(\tau)^{\left[\begin{smallmatrix}ax+bz&ay+bw\\\det(\gamma)^{-1}(cx+dz)&\det(\gamma)^{-1}(cy+dw)\end{smallmatrix}\right]}
\quad\textrm{since $h(\tau)$ has rational Fourier coefficients}\\
&=&h_{\left[\begin{smallmatrix}(ax+bz)/N\\(ay+bw)/N\end{smallmatrix}\right]}(\tau)
\quad\textrm{because}~\left[\begin{smallmatrix}ax+bz&ay+bw\\\det(\gamma)^{-1}(cx+dz)&
\det(\gamma)^{-1}(cy+dw)\end{smallmatrix}\right]
\in\mathrm{SL}_2(\mathbb{Z}/N\mathbb{Z})/\{\pm I_2\}\\
&=&h_{\left[\begin{smallmatrix}x&z\\y&w\end{smallmatrix}\right]\left[\begin{smallmatrix}a/N\\b/N\end{smallmatrix}\right]}(\tau)\\
&=&h_{{^t}\gamma\mathbf{v}}(\tau).
\end{eqnarray*}
Thus the family $\{h_\mathbf{v}(\tau)\}_\mathbf{v}$ satisfies (F3).
Lastly, since
\begin{equation*}
\phi_N(\{h_\mathbf{v}(\tau)\}_\mathbf{v})=h_{\left[\begin{smallmatrix}1/N\\0\end{smallmatrix}\right]}(\tau),
\end{equation*}
$\phi_N$ is surjective.
\par
Therefore, we conclude that $\mathrm{Fr}_N$ and $\mathcal{O}^1_N(\mathbb{Q})$
are isomorphic via the map $\phi_N$.
\end{proof}

\section {Conjugate subgroups of $\mathrm{SL}_2(\mathbb{R})$}

For a positive integer $N$, let
\begin{equation*}
\Gamma_1(N)=\left\{\gamma\in\mathrm{SL}_2(\mathbb{Z})~|~
\gamma\equiv\left[\begin{matrix}1&\mathrm{*}\\0&1\end{matrix}\right]
\Mod{N}\right\}\quad\textrm{and}\quad
\omega_N=\left[\begin{matrix}1/\sqrt{N}&0\\0&\sqrt{N}\end{matrix}\right].
\end{equation*}
From the observation
\begin{equation*}
\omega_N\left[\begin{matrix}a&b\\c&d\end{matrix}\right]\omega_N^{-1}=
\left[\begin{matrix}a&b/N\\Nc&d\end{matrix}\right]
~\textrm{for all}~
\left[\begin{matrix}a&b\\c&d\end{matrix}\right]\in\mathrm{SL}_2(\mathbb{R}),
\end{equation*}
we note that $\Gamma^1(N)$ and $\Gamma_1(N)$ are conjugate in $\mathrm{SL}_2(\mathbb{R})$, namely
\begin{equation}\label{grouprelation}
\omega_N\Gamma^1(N)\omega_N^{-1}=\Gamma_1(N).
\end{equation}
Let $\mathcal{F}_{1,N}(\mathbb{Q})$ be the field of meromorphic modular functions
for $X_1(N)$ with rational Fourier coefficients. One can readily check that
the relation (\ref{grouprelation}) gives rise to an isomorphism
\begin{equation}\label{map}
\begin{array}{rcl}
\mathcal{F}_{1,N}(\mathbb{Q})&\stackrel{\sim}{\rightarrow}&\mathcal{F}^1_N(\mathbb{Q})\\
h(\tau)=\sum_{n\gg-\infty}c_nq^n&\mapsto&(h\circ\omega_N)(\tau)=h(\tau/N)
=\sum_{n\gg-\infty}c_nq^{n/N}
\end{array}
\end{equation}
with inverse map $f(\tau)\mapsto (f\circ\omega_N^{-1})(\tau)=f(N\tau)$.
Furthermore, we let
$\mathcal{O}_{1,N}(\mathbb{Q})$ be
the subring of $\mathcal{F}_{1,N}(\mathbb{Q})$ consisting of weakly holomorphic functions.
Since the map in (\ref{map}) preserves weakly holomorphicity,
it induces an isomorphism
\begin{equation}\label{induced}
\mathcal{O}_{1,N}(\mathbb{Q})\stackrel{\sim}{\rightarrow}
\mathcal{O}^1_N(\mathbb{Q}).
\end{equation}
\par
Let $X_1(4)$ be the modular curve corresponding to the group $\Gamma_1(4)$.
It is well known that
$X_1(4)$ is of genus $0$ with three inequivalent cusps
$0$, $1/2$ and $i\infty$ (\cite[p. 131]{K-K}). Moreover, the function
\begin{equation*}
g_{1,4}(\tau)=\left(\frac{g_{\left[\begin{smallmatrix}1/2\\0\end{smallmatrix}\right]}(4\tau)}
{g_{\left[\begin{smallmatrix}1/4\\0\end{smallmatrix}\right]}(4\tau)}\right)^{8}
=q^{-1}(1+q)^8\prod_{n=1}^\infty\left(\frac{(1-q^{4n+2})(1-q^{4n-2})}
{(1-q^{4n+1})(1-q^{4n-1})}\right)^8
\end{equation*}
generates the function field $\mathbb{C}(X_1(4))$ of $X_1(4)$
over $\mathbb{C}$, having values $16$, $0$ and $\infty$ at cusps
$0$, $1/2$ and $i\infty$, respectively
(\cite[Theorem 3 (ii)]{K-K} and \cite[Tables 2 and 3]{K-S}).
Since $g_{1,4}(\tau)$ has rational Fourier coefficients, we obtain
by \cite[Lemma 4.1]{K-K} that
\begin{equation}\label{F14}
\mathcal{F}_{1,4}(\mathbb{Q})=\mathbb{Q}(g_{1,4}(\tau)).
\end{equation}

\begin{lemma}\label{unitlevel4}
Let $c\in\mathbb{C}$. Then, $g_{1,4}(\tau)-c$ has neither zeros nor poles on $\mathbb{H}$
if and only if $c\in\{0,16\}$.
\end{lemma}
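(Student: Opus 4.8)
The plan is to translate the statement about $g_{1,4}(\tau)$ into a statement about zeros and poles on the compact modular curve $X_1(4)$, using the fact that $g_{1,4}$ generates $\mathbb{C}(X_1(4))$ and has known values at the three cusps. Since $g_{1,4}(\tau)-c$ has neither zeros nor poles on $\mathbb{H}$ precisely when it is a modular unit of level $4$ (by the remark characterizing modular units as functions without zeros or poles on $\mathbb{H}$), and since any such function, viewed on $X_1(4)$, can only have its zeros and poles at the cusps $0$, $1/2$, $i\infty$.

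First I would recall from the excerpt that $g_{1,4}$ takes the values $16$, $0$, $\infty$ at the cusps $0$, $1/2$, $i\infty$ respectively, and that $X_1(4)$ has genus $0$ with exactly these three cusps. The function $g_{1,4}(\tau)-c$ is a rational function on $X_1(4)$ of degree equal to the number of poles counted with multiplicity; since $g_{1,4}$ itself has a single simple pole (at $i\infty$) — this follows because $g_{1,4}$ generates the genus-$0$ function field, so it has degree $1$ as a map $X_1(4)\to\mathbb{P}^1$ — the function $g_{1,4}-c$ also has exactly one simple pole, located at $i\infty$, for every $c\in\mathbb{C}$. Hence $g_{1,4}(\tau)-c$ never has a pole on $\mathbb{H}$; the only issue is whether it has a zero on $\mathbb{H}$.

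Next, $g_{1,4}-c$ has exactly one simple zero on $X_1(4)$ (degree $1$ map). That zero lies at a cusp if and only if $c$ is one of the finite cusp values, i.e. $c=16$ (zero at the cusp $0$) or $c=0$ (zero at the cusp $1/2$). For any other value of $c$, the unique zero of $g_{1,4}-c$ lies on $\mathbb{H}$, so $g_{1,4}(\tau)-c$ does have a zero on $\mathbb{H}$ and is therefore not a modular unit. Conversely, for $c\in\{0,16\}$ the unique zero is at a cusp and the unique pole is at $i\infty$, so $g_{1,4}(\tau)-c$ has neither zeros nor poles on $\mathbb{H}$. This establishes the equivalence.

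The main point to nail down carefully is that $g_{1,4}$ defines a degree-one map $X_1(4)\to\mathbb{P}^1$ with its only pole a simple pole at $i\infty$; this is exactly the content of $g_{1,4}$ generating $\mathbb{C}(X_1(4))$ together with the $q$-expansion $q^{-1}(1+q)^8\prod(\cdots)$ showing a simple pole at $i\infty$, so there is no real obstacle here — it is bookkeeping with the divisor on a genus-$0$ curve. One should also double-check that the three cusps are genuinely distinct points where $g_{1,4}$ takes the stated distinct values $16,0,\infty$, which is quoted from \cite{K-K} and \cite{K-S}, so the only care needed is in phrasing the degree/divisor argument cleanly.
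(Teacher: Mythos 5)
Your argument is correct. Note, however, that the paper does not actually prove this lemma at all: its ``proof'' is a one-line citation to equation (4) of the reference \cite{E-K-S}, so there is nothing in the paper to compare against step by step. Your route is a clean, self-contained replacement. The key points you rely on --- that $X_1(4)$ has genus $0$ with exactly the three cusps $0$, $1/2$, $i\infty$, that $g_{1,4}$ generates $\mathbb{C}(X_1(4))$ (hence $[\mathbb{C}(X_1(4)):\mathbb{C}(g_{1,4})]=1$, so $g_{1,4}$ is a degree-one map, i.e.\ an isomorphism $X_1(4)\stackrel{\sim}{\to}\mathbb{P}^1$), and that its cusp values are $16$, $0$, $\infty$ --- are all quoted in the paper from \cite{K-K} and \cite{K-S}, so the divisor bookkeeping you do (unique simple pole at $i\infty$, unique simple zero of $g_{1,4}-c$ lying at a cusp precisely when $c\in\{0,16,\infty\}$, hence for $c\in\mathbb{C}$ precisely when $c\in\{0,16\}$) closes the argument. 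One could even shortcut the ``no poles on $\mathbb{H}$'' half by observing that $g_{1,4}$ is an eighth power of a ratio of Siegel functions, which have neither zeros nor poles on $\mathbb{H}$, but your degree-one argument covers it anyway. What your approach buys is independence from the external reference; what the paper's citation buys is brevity. No gaps.
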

\begin{proof}
See \cite[(4)]{E-K-S}.
\end{proof}

\begin{theorem}\label{4structure}
We get the following structures.
\begin{itemize}
\item[\textup{(i)}] $\mathcal{O}_{1,4}(\mathbb{Q})=\mathbb{Q}[g_{1,4}(\tau),g_{1,4}(\tau)^{-1},
(g_{1,4}(\tau)-16)^{-1}]$.
\item[\textup{(ii)}] $\mathcal{O}^1_4(\mathbb{Q})=\mathbb{Q}[g^1_4(\tau),g^1_4(\tau)^{-1},
(g^1_4(\tau)-16)^{-1}]$, where
$g^1_4(\tau)=g_{1,4}(\tau/4)=g_{\left[\begin{smallmatrix}1/4\\0\end{smallmatrix}\right]}(\tau)^{-8}
g_{\left[\begin{smallmatrix}1/2\\0\end{smallmatrix}\right]}(\tau)^{8}$.
\end{itemize}
\end{theorem}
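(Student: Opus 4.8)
The plan is to establish (i) directly and then deduce (ii) by transporting it through the isomorphism (\ref{induced}). For the inclusion ``$\supseteq$'' in (i): by Lemma \ref{unitlevel4} with $c=0$, the function $g_{1,4}(\tau)$ has neither zeros nor poles on $\mathbb{H}$, so it is holomorphic on $\mathbb{H}$ and, being an element of $\mathcal{F}_{1,4}(\mathbb{Q})$ by (\ref{F14}), it lies in $\mathcal{O}_{1,4}(\mathbb{Q})$ together with its reciprocal $g_{1,4}(\tau)^{-1}$; by Lemma \ref{unitlevel4} with $c=16$ the function $g_{1,4}(\tau)-16$ likewise has no zeros on $\mathbb{H}$, so $(g_{1,4}(\tau)-16)^{-1}\in\mathcal{O}_{1,4}(\mathbb{Q})$. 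Since $\mathcal{O}_{1,4}(\mathbb{Q})$ is a ring, it contains $\mathbb{Q}[g_{1,4}(\tau),g_{1,4}(\tau)^{-1},(g_{1,4}(\tau)-16)^{-1}]$.

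For the reverse inclusion ``$\subseteq$'', let $h(\tau)\in\mathcal{O}_{1,4}(\mathbb{Q})$; by (\ref{F14}) we may write $h(\tau)=P(g_{1,4}(\tau))/Q(g_{1,4}(\tau))$ with $P,Q\in\mathbb{Q}[x]$ coprime. The key point is that every complex root of $Q$ must lie in $\{0,16\}$: if $c\in\mathbb{C}\setminus\{0,16\}$ were a root of $Q$, then $g_{1,4}(\tau)-c$ is holomorphic on $\mathbb{H}$ but, by Lemma \ref{unitlevel4}, vanishes at some $\tau_0\in\mathbb{H}$, while $P(c)\neq 0$ by coprimality, so $h(\tau)$ would have a pole at $\tau_0$, contradicting weak holomorphicity. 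Hence $Q(x)=c_0 x^a (x-16)^b$ with $c_0$ a nonzero rational number and $a,b\geq 0$ integers, and consequently $h(\tau)=c_0^{-1}P(g_{1,4}(\tau))\,g_{1,4}(\tau)^{-a}(g_{1,4}(\tau)-16)^{-b}$ belongs to $\mathbb{Q}[g_{1,4}(\tau),g_{1,4}(\tau)^{-1},(g_{1,4}(\tau)-16)^{-1}]$. This proves (i).

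For (ii), I would apply the $\mathbb{Q}$-algebra isomorphism $\mathcal{O}_{1,4}(\mathbb{Q})\stackrel{\sim}{\rightarrow}\mathcal{O}^1_4(\mathbb{Q})$, $h(\tau)\mapsto h(\tau/4)$, furnished by (\ref{map}) and (\ref{induced}); it sends $g_{1,4}(\tau)$ to $g_{1,4}(\tau/4)=g^1_4(\tau)$, hence maps the ring in (i) onto $\mathbb{Q}[g^1_4(\tau),g^1_4(\tau)^{-1},(g^1_4(\tau)-16)^{-1}]$, and applying it to the equality in (i) gives (ii). The formula $g^1_4(\tau)=g_{\left[\begin{smallmatrix}1/4\\0\end{smallmatrix}\right]}(\tau)^{-8}g_{\left[\begin{smallmatrix}1/2\\0\end{smallmatrix}\right]}(\tau)^{8}$ is obtained by substituting $\tau/4$ for $\tau$ in the definition of $g_{1,4}(\tau)$. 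All the manipulations are routine; the one genuinely essential ingredient is Lemma \ref{unitlevel4}, which guarantees that every value outside $\{0,16\}$ is attained by $g_{1,4}$ at some point of $\mathbb{H}$ --- equivalently, that the cusps of $X_1(4)$ are precisely the points lying over $0$, $16$, $\infty$ --- and this is exactly what forces the denominator $Q$ to be supported on $\{0,16\}$; knowing only that $g_{1,4}$ generates the function field would not be enough.
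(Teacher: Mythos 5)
Your proposal is correct and follows essentially the same route as the paper's own proof: the inclusion $\supseteq$ via Lemma \ref{unitlevel4} applied at $c=0$ and $c=16$, the reverse inclusion by writing $h=P(g_{1,4})/Q(g_{1,4})$ with coprime polynomials and using Lemma \ref{unitlevel4} to force every root of $Q$ into $\{0,16\}$, and part (ii) by transport through the isomorphism (\ref{induced}). Your closing remark correctly identifies Lemma \ref{unitlevel4} as the essential ingredient beyond the generation statement (\ref{F14}).
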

\begin{proof}
(i) Since $g_{1,4}(\tau)$ and $(g_{1,4}(\tau)-16)$ are modular units in $\mathcal{F}_{1,4}(\mathbb{Q})$ by Lemma \ref{unitlevel4} and (\ref{F14}),
we get the inclusion
$\mathcal{O}_{1,4}(\mathbb{Q})\supseteq\mathbb{Q}[g_{1,4}(\tau),g_{1,4}(\tau)^{-1},
(g_{1,4}(\tau)-16)^{-1}]$.
\par
Conversely, let $h(\tau)\in\mathcal{O}_{1,4}(\mathbb{Q})$.
By (\ref{F14}) we can express $h(\tau)$ as $h(\tau)=A(g_{1,4}(\tau))/B(g_{1,4}(\tau))$ for some
polynomials $A(x),B(x)\in\mathbb{Q}[x]$ which are relatively prime.
Suppose that $B(x)$ has a zero $c\in\overline{\mathbb{Q}}$ not equal to $0$ or $16$.
We note by Lemma \ref{unitlevel4} that $g_{1,4}(\tau_0)-c=0$ for some $\tau_0\in\mathbb{H}$,
from which it follows that $B(g_{1,4}(\tau_0))=0$. But since $A(x)$ is not divisible by $(x-c)$
in $\overline{\mathbb{Q}}[x]$,
we get $A(g_{1,4}(\tau_0))\neq0$. This contradicts that $h(\tau)$ is weakly holomorphic.
Thus $B(x)$ has no zeros other than $0$ and $16$, which implies the converse inclusion $\mathcal{O}_{1,4}(\mathbb{Q})
\subseteq\mathbb{Q}[g_{1,4}(\tau),g_{1,4}(\tau)^{-1},(g_{1,4}(\tau)-16)^{-1}]$.\\
(ii) This follows immediately from (i) and the isomorphism given in (\ref{induced}).
\end{proof}

\section {Generators for $N\equiv0\Mod{4}$}

Now we shall present explicit generators of the ring $\mathcal{O}^1_N(\mathbb{Q})$
over $\mathbb{Q}$ when $N\equiv0\Mod{4}$. This
amounts to classifying all Fricke families of level $N$ by Theorem \ref{isomorphic}.

\begin{proposition}\label{F1N}
If $N\geq2$, then we get $\mathcal{F}^1_N(\mathbb{Q})=\mathcal{F}_1(f_{\left[\begin{smallmatrix}
1/N\\0\end{smallmatrix}\right]}(\tau))$.
\end{proposition}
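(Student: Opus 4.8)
The plan is to show the two inclusions between $\mathcal{F}^1_N(\mathbb{Q})$ and $\mathcal{F}_1(f_{\left[\begin{smallmatrix}1/N\\0\end{smallmatrix}\right]}(\tau))$. The inclusion $\mathcal{F}_1(f_{\left[\begin{smallmatrix}1/N\\0\end{smallmatrix}\right]}(\tau))\subseteq\mathcal{F}^1_N(\mathbb{Q})$ is essentially already available: $\mathcal{F}_1=\mathbb{Q}(j(\tau))\subseteq\mathcal{F}^1_N(\mathbb{Q})$ since $j$ is modular for $\mathrm{SL}_2(\mathbb{Z})\supseteq\Gamma^1(N)$ with rational Fourier coefficients, and by Proposition \ref{typical} together with Lemma \ref{special} (applied to the Fricke family $\{f_\mathbf{v}(\tau)\}_\mathbf{v}$) the function $f_{\left[\begin{smallmatrix}1/N\\0\end{smallmatrix}\right]}(\tau)$ lies in $\mathcal{O}^1_N(\mathbb{Q})\subseteq\mathcal{F}^1_N(\mathbb{Q})$. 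So the real content is the reverse inclusion.

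For the reverse inclusion I would argue via Galois theory using the identification (\ref{Gal}). Set $h_0(\tau)=f_{\left[\begin{smallmatrix}1/N\\0\end{smallmatrix}\right]}(\tau)$. Since $h_0\in\mathcal{F}_N$, we have $\mathcal{F}_1\subseteq\mathcal{F}_1(h_0)\subseteq\mathcal{F}_N$, and $\mathcal{F}_1(h_0)$ corresponds under (\ref{Gal}) to the subgroup $H=\{\gamma\in\mathrm{GL}_2(\mathbb{Z}/N\mathbb{Z})/\{\pm I_2\}\mid h_0(\tau)^\gamma=h_0(\tau)\}$, i.e.\ the stabilizer of $h_0$. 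By (F3) for the Fricke family $\{f_\mathbf{v}\}$, we have $h_0(\tau)^\gamma=f_{{^t}\gamma\left[\begin{smallmatrix}1/N\\0\end{smallmatrix}\right]}(\tau)$, so $\gamma\in H$ iff $f_{{^t}\gamma\left[\begin{smallmatrix}1/N\\0\end{smallmatrix}\right]}(\tau)=f_{\left[\begin{smallmatrix}1/N\\0\end{smallmatrix}\right]}(\tau)$, which by Lemma \ref{difference}(i) is equivalent to ${^t}\gamma\left[\begin{smallmatrix}1/N\\0\end{smallmatrix}\right]\equiv\pm\left[\begin{smallmatrix}1/N\\0\end{smallmatrix}\right]\Mod{\mathbb{Z}^2}$, i.e.\ to the first column of ${^t}\gamma$ being $\pm\left[\begin{smallmatrix}1\\0\end{smallmatrix}\right]$ mod $N$; equivalently the first row of $\gamma$ is $\pm(1,0)$ mod $N$. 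On the other hand, $\mathcal{F}^1_N(\mathbb{Q})$ corresponds under (\ref{Gal}) to the subgroup generated by $G_N$ (giving rationality of Fourier coefficients, via (A1)) and the image of $\Gamma^1(N)$ in $\mathrm{SL}_2(\mathbb{Z}/N\mathbb{Z})/\{\pm I_2\}$ (giving $\Gamma^1(N)$-invariance, via (A2)); call this subgroup $K$. A matrix $\left[\begin{smallmatrix}1&0\\ \ast&1\end{smallmatrix}\right]\in\Gamma^1(N)$ has first row $(1,0)$, and $\left[\begin{smallmatrix}1&0\\0&d\end{smallmatrix}\right]\in G_N$ has first row $(1,0)$, so every element of $K$ has first row of the form $(x,0)$ with $x\in(\mathbb{Z}/N\mathbb{Z})^\times$ — hence, up to $\pm I_2$, first row $\pm(1,0)$ only when $x=\pm1$; but in general $K\subseteq\{\gamma\mid \text{first row of }\gamma\text{ is }(\ast,0)\}$. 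So I should be slightly more careful: I claim $K$ equals exactly the stabilizer $H$. The inclusion $K\subseteq H$ is what was just sketched restricted to the generators, provided one checks that a matrix with first row $(x,0)$, $x\in(\mathbb{Z}/N\mathbb{Z})^\times$, always fixes $h_0$; indeed such a $\gamma$ can be written as $\left[\begin{smallmatrix}1&0\\0&d\end{smallmatrix}\right]$ times an element of $\Gamma^1(N)$'s image, and both fix $h_0$ (the former because $h_0$ has rational Fourier coefficients, the latter by Lemma \ref{special}). For $H\subseteq K$: if $\gamma$ fixes $h_0$ then by the computation above its first row is $\pm(1,0)$ mod $N$, i.e.\ (after multiplying by $-I_2$ if needed) $\gamma=\left[\begin{smallmatrix}1&0\\ c&d\end{smallmatrix}\right]$ with $d\in(\mathbb{Z}/N\mathbb{Z})^\times$; writing $\gamma=\left[\begin{smallmatrix}1&0\\0&d\end{smallmatrix}\right]\left[\begin{smallmatrix}1&0\\ d^{-1}c&1\end{smallmatrix}\right]$ exhibits $\gamma\in G_N\cdot(\text{image of }\Gamma^1(N))=K$. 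Hence $H=K$, so by the Galois correspondence $\mathcal{F}_1(h_0)=\mathcal{F}^1_N(\mathbb{Q})$.

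The step I expect to require the most care is the precise matching of the subgroup $K\subseteq\mathrm{GL}_2(\mathbb{Z}/N\mathbb{Z})/\{\pm I_2\}$ fixing $\mathcal{F}^1_N(\mathbb{Q})$: one must confirm, via the decomposition $\mathrm{GL}_2(\mathbb{Z}/N\mathbb{Z})/\{\pm I_2\}=G_N\cdot\mathrm{SL}_2(\mathbb{Z}/N\mathbb{Z})/\{\pm I_2\}$ and the action rules (A1), (A2), that a function in $\mathcal{F}_N$ lies in $\mathcal{F}^1_N(\mathbb{Q})$ precisely when it is fixed by $G_N$ and by the image of $\Gamma^1(N)$, and that this combined fixed group is exactly $\{\gamma\in\mathrm{GL}_2(\mathbb{Z}/N\mathbb{Z})/\{\pm I_2\}\mid \text{first row of }\gamma\equiv\pm(1,0)\Mod{N}\}$. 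Note this argument does not use $N\equiv0\Mod4$; it works for all $N\geq2$, consistent with the statement. Once $H=K$ is established, the conclusion is immediate from the Galois correspondence (\ref{Gal}) applied to the extension $\mathcal{F}_N/\mathcal{F}_1$.
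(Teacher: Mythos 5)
Your proposal is correct and follows essentially the same route as the paper: both establish $\mathcal{F}_1(f_{\left[\begin{smallmatrix}1/N\\0\end{smallmatrix}\right]}(\tau))\subseteq\mathcal{F}^1_N(\mathbb{Q})$ via Proposition \ref{typical} and Lemma \ref{special}, and then use the Galois correspondence for $\mathcal{F}_N/\mathcal{F}_1$ together with Lemma \ref{difference} (i) to identify the stabilizer of $f_{\left[\begin{smallmatrix}1/N\\0\end{smallmatrix}\right]}(\tau)$ with $\mathrm{Gal}(\mathcal{F}_N/\mathcal{F}^1_N(\mathbb{Q}))$ inside $G_N\cdot\mathrm{SL}_2(\mathbb{Z}/N\mathbb{Z})/\{\pm I_2\}$. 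Your brief detour about first rows of the form $(x,0)$ is an unnecessary worry (all products of the generators in fact have first row $(1,0)$ mod $N$), but you resolve it correctly and the argument is sound.
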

\begin{proof}
We recall that $\mathcal{F}_N$ is a Galois extension of $\mathcal{F}_1$ with
\begin{equation*}
\mathrm{Gal}(\mathcal{F}_N/\mathcal{F}_1)\simeq\mathrm{GL}_2(\mathbb{Z}/N(\mathbb{Z})/\{\pm I_2\}
\simeq G_N\cdot\mathrm{SL}_2(\mathbb{Z}/N\mathbb{Z})/\{\pm I_2\}.
\end{equation*}
Note by (A1) and (A2) that $\mathcal{F}_N$ is a Galois extension of
$\mathcal{F}^1_N(\mathbb{Q})$ with
\begin{equation*}
\mathrm{Gal}(\mathcal{F}_N/\mathcal{F}^1_N(\mathbb{Q}))\simeq
G_N\cdot\left\{\gamma\in\mathrm{SL}_2(\mathbb{Z}/N\mathbb{Z})/\{\pm I_2\}~|~
\gamma\equiv\pm\left[\begin{matrix}1&0\\\textrm{*}&1\end{matrix}\right]\Mod{N}\right\}.
\end{equation*}
Let $F=\mathcal{F}_1(f_{\left[\begin{smallmatrix}
1/N\\0\end{smallmatrix}\right]}(\tau))$.
Since $\{f_\mathbf{v}(\tau)\}_{\mathbf{v}\in\mathcal{V}_N}\in\mathrm{Fr}_N$
by Proposition \ref{typical},
we get the inclusion $F\subseteq\mathcal{F}^1_N(\mathbb{Q})$ by Lemma \ref{special}.
Suppose that $\gamma=\alpha\beta$ with $\alpha\in G_N$ and $\beta=\left[\begin{matrix}a&b\\c&d\end{matrix}\right]\in\mathrm{SL}_2(\mathbb{Z}/N\mathbb{Z})/\{\pm I_2\}$ leaves
$f_{\left[\begin{smallmatrix}
1/N\\0\end{smallmatrix}\right]}(\tau)$ fixed. We then derive that
\begin{eqnarray*}
f_{\left[\begin{smallmatrix}
1/N\\0\end{smallmatrix}\right]}(\tau)&=&
(f_{\left[\begin{smallmatrix}
1/N\\0\end{smallmatrix}\right]}(\tau)^\alpha)^\beta\\
&=&f_{\left[\begin{smallmatrix}
1/N\\0\end{smallmatrix}\right]}(\tau)^\beta\quad\textrm{since $f_{\left[\begin{smallmatrix}
1/N\\0\end{smallmatrix}\right]}(\tau)$ has rational Fourier coefficients}\\
&=&f_{{^t}\beta\left[\begin{smallmatrix}
1/N\\0\end{smallmatrix}\right]}(\tau)\quad\textrm{by (F2) and (F3)}\\
&=&f_{\left[\begin{smallmatrix}
a/N\\b/N\end{smallmatrix}\right]}(\tau).
\end{eqnarray*}
Thus we get $b\equiv0\Mod{N}$ and $a\equiv d\equiv\pm 1\Mod{N}$
by Lemma \ref{difference} (i) and the fact $\beta\in\mathrm{SL}_2(\mathbb{Z}/N\mathbb{Z})/\{\pm I_2\}$. This argument implies $F\supseteq\mathcal{F}^1_N(\mathbb{Q})$
by Galois theory. Therefore, we conclude that
$F=\mathcal{F}_1(f_{\left[\begin{smallmatrix}1/N\\0\end{smallmatrix}\right]}(\tau))=\mathcal{F}^1_N(\mathbb{Q})$. \end{proof}

When $N\geq8$ and $N\equiv0\Mod{4}$, we consider a function
\begin{equation*}
f^1_N(\tau)=\frac{f_{\left[\begin{smallmatrix}1/N\\0\end{smallmatrix}\right]}(\tau)
-f_{\left[\begin{smallmatrix}1/2\\0\end{smallmatrix}\right]}(\tau)}
{f_{\left[\begin{smallmatrix}1/4\\0\end{smallmatrix}\right]}(\tau)
-f_{\left[\begin{smallmatrix}1/2\\0\end{smallmatrix}\right]}(\tau)}\quad(\tau\in\mathbb{H}).
\end{equation*}
It is a modular unit belonging to $\mathcal{O}^1_N(\mathbb{Q})$ by Proposition \ref{typical},
Lemmas \ref{difference} (i) and \ref{special}.

\begin{theorem}\label{integral}
If $N\geq8$ and $N\equiv0\Mod{4}$, then we have
\begin{equation*}
\mathcal{O}^1_N(\mathbb{Q})=\mathcal{O}^1_4(\mathbb{Q})[f^1_N(\tau)]
=\mathbb{Q}[g^1_4(\tau),g^1_4(\tau)^{-1},
(g^1_4(\tau)-16)^{-1},f^1_N(\tau)].
\end{equation*}
\end{theorem}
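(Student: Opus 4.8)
The second displayed equality is immediate from Theorem~\ref{4structure}(ii), so the plan is to prove $\mathcal{O}^1_N(\mathbb{Q})=\mathcal{O}^1_4(\mathbb{Q})[f^1_N(\tau)]$. The inclusion $\supseteq$ needs nothing: since $\Gamma^1(N)\subseteq\Gamma^1(4)$ and $4\mid N$ one has $\mathcal{O}^1_4(\mathbb{Q})\subseteq\mathcal{O}^1_N(\mathbb{Q})$, and $f^1_N(\tau)$ is a modular unit in $\mathcal{O}^1_N(\mathbb{Q})$ by the remark preceding the theorem. For the reverse inclusion I would first record the field-theoretic fact $\mathcal{F}^1_N(\mathbb{Q})=\mathcal{F}^1_4(\mathbb{Q})\bigl(f^1_N(\tau)\bigr)$: the Fricke functions $f_{\left[\begin{smallmatrix}1/4\\0\end{smallmatrix}\right]}(\tau)$ and $f_{\left[\begin{smallmatrix}1/2\\0\end{smallmatrix}\right]}(\tau)$ lie in $\mathcal{F}^1_4(\mathbb{Q})$ by Proposition~\ref{typical} and Lemma~\ref{special} and are distinct by Lemma~\ref{difference}(i), while
\[
f_{\left[\begin{smallmatrix}1/N\\0\end{smallmatrix}\right]}(\tau)=\bigl(f_{\left[\begin{smallmatrix}1/4\\0\end{smallmatrix}\right]}(\tau)-f_{\left[\begin{smallmatrix}1/2\\0\end{smallmatrix}\right]}(\tau)\bigr)f^1_N(\tau)+f_{\left[\begin{smallmatrix}1/2\\0\end{smallmatrix}\right]}(\tau);
\]
together with Proposition~\ref{F1N} and $\mathcal{F}_1=\mathbb{Q}(j)\subseteq\mathcal{F}^1_4(\mathbb{Q})$ this yields $\mathcal{F}^1_N(\mathbb{Q})=\mathcal{F}^1_4(\mathbb{Q})\bigl(f^1_N(\tau)\bigr)$. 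Set $n=[\mathcal{F}^1_N(\mathbb{Q}):\mathcal{F}^1_4(\mathbb{Q})]$.

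The crucial point is to show that the minimal polynomial $P(X)$ of $f^1_N(\tau)$ over $\mathcal{F}^1_4(\mathbb{Q})$ lies in $\mathcal{O}^1_4(\mathbb{Q})[X]$ and has $\mathrm{disc}(P)$ a unit of $\mathcal{O}^1_4(\mathbb{Q})$. Since $4\mid N$, $\mathcal{F}_N$ is Galois over $\mathcal{F}^1_4(\mathbb{Q})$, and as $f^1_N(\tau)$ has rational Fourier coefficients it is fixed by $G_N$; hence, arguing as in Proposition~\ref{F1N}, its $n$ distinct conjugates over $\mathcal{F}^1_4(\mathbb{Q})$ are $h_1(\tau)=f^1_N(\tau),h_2(\tau),\ldots,h_n(\tau)$ with $h_k(\tau)=(f^1_N\circ\gamma_k)(\tau)$ for suitable $\gamma_k=\left[\begin{smallmatrix}a_k&b_k\\c_k&d_k\end{smallmatrix}\right]\in\Gamma^1(4)$. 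From (A2), (F3), and the congruences $a_k\equiv d_k\equiv1,\ b_k\equiv0\Mod{4}$ (which, by Lemma~\ref{difference}(i), fix $f_{\left[\begin{smallmatrix}1/4\\0\end{smallmatrix}\right]}(\tau)$ and $f_{\left[\begin{smallmatrix}1/2\\0\end{smallmatrix}\right]}(\tau)$) one gets, writing $\mathbf{v}_k=\left[\begin{smallmatrix}a_k/N\\b_k/N\end{smallmatrix}\right]$,
\[
h_k(\tau)=\frac{f_{\mathbf{v}_k}(\tau)-f_{\left[\begin{smallmatrix}1/2\\0\end{smallmatrix}\right]}(\tau)}{f_{\left[\begin{smallmatrix}1/4\\0\end{smallmatrix}\right]}(\tau)-f_{\left[\begin{smallmatrix}1/2\\0\end{smallmatrix}\right]}(\tau)},\qquad\text{hence}\qquad h_k(\tau)-h_l(\tau)=\frac{f_{\mathbf{v}_k}(\tau)-f_{\mathbf{v}_l}(\tau)}{f_{\left[\begin{smallmatrix}1/4\\0\end{smallmatrix}\right]}(\tau)-f_{\left[\begin{smallmatrix}1/2\\0\end{smallmatrix}\right]}(\tau)}.
\]
For $k\neq l$ the left side is nonzero (the $h_k(\tau)$ are distinct), so $f_{\mathbf{v}_k}(\tau)\neq f_{\mathbf{v}_l}(\tau)$, whence $\mathbf{v}_k\not\equiv\pm\mathbf{v}_l\Mod{\mathbb{Z}^2}$ by Lemma~\ref{difference}(i); also $\left[\begin{smallmatrix}1/4\\0\end{smallmatrix}\right]\not\equiv\pm\left[\begin{smallmatrix}1/2\\0\end{smallmatrix}\right]\Mod{\mathbb{Z}^2}$. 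Therefore Lemma~\ref{difference}(ii) applies to numerator and denominator at once, and the common factor $2^{12}3^6\,j(\tau)^2(j(\tau)-1728)^3$ \emph{cancels}, so $\bigl(h_k(\tau)-h_l(\tau)\bigr)^6$ is a ratio of sixth powers of Siegel functions. Since Siegel functions have neither zeros nor poles on $\mathbb{H}$, each $h_k(\tau)-h_l(\tau)$ is a modular unit, so $P(X)=\prod_{k=1}^n\bigl(X-h_k(\tau)\bigr)$ has $\mathrm{disc}(P)=\prod_{k<l}\bigl(h_k(\tau)-h_l(\tau)\bigr)^2$ a modular unit. Moreover the coefficients of $P(X)$, being symmetric polynomials in the $h_k(\tau)=(f^1_N\circ\gamma_k)(\tau)$, are holomorphic on $\mathbb{H}$ and lie in $\mathcal{F}^1_4(\mathbb{Q})$, so $P(X)\in\mathcal{O}^1_4(\mathbb{Q})[X]$; and $\mathrm{disc}(P)$, being a modular unit in $\mathcal{F}^1_4(\mathbb{Q})$, lies in $\mathcal{O}^1_4(\mathbb{Q})$ together with its reciprocal, hence is a unit of $\mathcal{O}^1_4(\mathbb{Q})$.

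To finish I would descend coefficients by the trace. Let $h(\tau)\in\mathcal{O}^1_N(\mathbb{Q})$ and write $h(\tau)=\sum_{i=0}^{n-1}a_i(\tau)f^1_N(\tau)^i$ with $a_i(\tau)\in\mathcal{F}^1_4(\mathbb{Q})$, possible by the first paragraph. Applying $\mathrm{Tr}:=\mathrm{Tr}_{\mathcal{F}^1_N(\mathbb{Q})/\mathcal{F}^1_4(\mathbb{Q})}$ to $h(\tau)f^1_N(\tau)^j$ for $0\le j\le n-1$ gives the linear system
\[
\bigl(\mathrm{Tr}(f^1_N(\tau)^{i+j})\bigr)_{0\le i,j\le n-1}\,\bigl(a_i(\tau)\bigr)_{0\le i\le n-1}=\bigl(\mathrm{Tr}(h(\tau)f^1_N(\tau)^j)\bigr)_{0\le j\le n-1}.
\]
Its coefficient matrix equals $\bigl(\sum_{k=1}^n h_k(\tau)^{i+j}\bigr)_{i,j}$, with determinant $\mathrm{disc}(P)\in\mathcal{O}^1_4(\mathbb{Q})^\times$; every entry of this matrix and of the right-hand side is the trace over $\mathcal{F}^1_4(\mathbb{Q})$ of a weakly holomorphic element of $\mathcal{F}^1_N(\mathbb{Q})$ ($h(\tau)$, $f^1_N(\tau)$, and all their conjugates are holomorphic on $\mathbb{H}$), hence, being weakly holomorphic and in $\mathcal{F}^1_4(\mathbb{Q})$, lies in $\mathcal{O}^1_4(\mathbb{Q})$. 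By Cramer's rule each $a_i(\tau)$ lies in $\mathcal{O}^1_4(\mathbb{Q})$, so $h(\tau)\in\mathcal{O}^1_4(\mathbb{Q})[f^1_N(\tau)]$, as required.

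The step carrying the real weight is the second paragraph. The subtlety is that differences of Fricke functions \emph{do} vanish at some points of $\mathbb{H}$ — at the $\mathrm{SL}_2(\mathbb{Z})$-orbits of $\zeta_3$ and $\zeta_4$, where the factor $j(\tau)^2(j(\tau)-1728)^3$ in Lemma~\ref{difference}(ii) degenerates because $g_2(\tau)$, respectively $g_3(\tau)$, vanishes — so one cannot simply assert that conjugates of $f^1_N(\tau)$ stay apart on $\mathbb{H}$. What rescues the argument is precisely the quotient shape of $f^1_N(\tau)$, which removes that factor from $\bigl(h_k(\tau)-h_l(\tau)\bigr)^6$ and leaves a ratio of Siegel functions. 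The hypotheses $N\ge8$ and $4\mid N$ are used, as usual, to make $f^1_N(\tau)$ and the inclusion $\mathcal{F}^1_4(\mathbb{Q})\subseteq\mathcal{F}^1_N(\mathbb{Q})$ available and to ensure the conjugating matrices $\gamma_k$ may be taken in $\Gamma^1(4)$, so that the displayed formula for $h_k(\tau)$ holds.
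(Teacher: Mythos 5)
Your proposal is correct and follows essentially the same route as the paper: express $h(\tau)$ in the power basis of $f^1_N(\tau)$ over $\mathcal{F}^1_4(\mathbb{Q})$, descend the coefficients by taking traces, and observe that the resulting Vandermonde-type determinant $\prod_{k<l}(h_k(\tau)-h_l(\tau))^2$ is a modular unit of $\mathcal{O}^1_4(\mathbb{Q})$ because each difference of conjugates is a quotient of differences of Fricke functions whose factors $j(\tau)^2(j(\tau)-1728)^3$ cancel, leaving a ratio of Siegel functions. If anything, your second paragraph spells out this cancellation via Lemma~\ref{difference}~(ii) more explicitly than the paper does (which at that point cites only Lemma~\ref{difference}~(i)), so no changes are needed.
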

\begin{proof}
It is obvious that $\mathcal{O}^1_N(\mathbb{Q})\supseteq\mathcal{O}^1_4(\mathbb{Q})[f^1_N(\tau)]$.
\par
For the converse inclusion, let $h(\tau)\in\mathcal{O}^1_N(\mathbb{Q})$. Note
by Proposition \ref{F1N} and Lemma \ref{special} that
\begin{equation*}
\mathcal{F}^1_N(\mathbb{Q})=\mathcal{F}_1(f_{\left[\begin{smallmatrix}1/N\\0\end{smallmatrix}\right]}(\tau))
=\mathcal{F}^1_4(\mathbb{Q})(f^1_N(\tau)).
\end{equation*}
So we can express $h=h(\tau)$ as
\begin{equation}\label{eqn}
h=c_0+c_1f+\cdots+c_{d-1}f^{d-1}
\end{equation}
where $d=[\mathcal{F}^1_N(\mathbb{Q}):\mathcal{F}^1_4(\mathbb{Q})]$
and $c_0,c_1,\ldots,c_{d-1}\in\mathcal{F}^1_4(\mathbb{Q})$.
Multiplying both sides of (\ref{eqn}) by $1,f,\ldots,f^{d-1}$, respectively,
yields a linear system (with unknowns $c_0,c_1,\ldots,c_{d-1}$)
\begin{equation*}
\left[\begin{matrix}
1&f&\cdots&f^{d-1}\\
f&f^2&\cdots&f^d\\
\vdots&\vdots&\ddots&\vdots\\
f^{d-1}&f^d&\cdots&f^{2d-2}
\end{matrix}\right]\left[\begin{matrix}c_0\\c_1\\\vdots\\c_{d-1}\end{matrix}\right]
=\left[\begin{matrix}h\\fh\\\vdots\\f^{d-1}h\end{matrix}\right].
\end{equation*}
By taking the trace $\mathrm{Tr}=\mathrm{Tr}_{\mathcal{F}^1_N(\mathbb{Q})/\mathcal{F}^1_4(\mathbb{Q})}$
on both sides we obtain
\begin{equation*}
T\left[\begin{matrix}c_0\\c_1\\\vdots\\c_{d-1}\end{matrix}\right]
=\left[\begin{matrix}\mathrm{Tr}(h)\\\mathrm{Tr}(fh)\\\vdots\\\mathrm{Tr}(f^{d-1}h)\end{matrix}\right]
\quad\textrm{with}\quad T=\left[\begin{matrix}
\mathrm{Tr}(1)&\mathrm{Tr}(f)&\cdots&\mathrm{Tr}(f^{d-1})\\
\mathrm{Tr}(f)&\mathrm{Tr}(f^2)&\cdots&\mathrm{Tr}(f^d)\\
\vdots&\vdots&\ddots&\vdots\\
\mathrm{Tr}(f^{d-1})&\mathrm{Tr}(f^d)&\cdots&\mathrm{Tr}(f^{2d-2})
\end{matrix}\right].
\end{equation*}
Since every $\mathrm{Tr}(*)$, appeared
in the above expression, lies in $\mathcal{O}^1_4(\mathbb{Q})$, we get
\begin{equation}\label{det-1}
c_0,c_1,\ldots,c_{d-1}\in\det(T)^{-1}\mathcal{O}^1_4(\mathbb{Q}).
\end{equation}
If we let $f_1,f_2,\ldots,f_d$ be all the Galois conjugates of $f$ over $\mathcal{F}^1_4(\mathbb{Q})$, then we find that
\begin{eqnarray*}
\det(T)&=&\left|\begin{matrix}
\sum_{k=1}^d f_k^0 & \sum_{k=1}^d f_k^1 & \cdots & \sum_{k=1}^d f_k^{d-1}\\
\sum_{k=1}^d f_k^1 & \sum_{k=1}^d f_k^2 & \cdots & \sum_{k=1}^d f_k^{d}\\
\vdots & \vdots & \ddots & \vdots\\
\sum_{k=1}^d f_k^{d-1} & \sum_{k=1}^d f_k^d & \cdots & \sum_{k=1}^d f_k^{2d-2}\end{matrix}
\right|\\
&=&
\left|\begin{matrix}
f_1^0 & f_2^0 & \cdots & f_d^0\\
f_1^1 & f_2^1 & \cdots & f_d^1\\
\vdots & \vdots & \ddots & \vdots\\
f_1^{d-1} & f_2^{d-1} & \cdots & f_d^{d-1}
\end{matrix}\right|
\left|\begin{matrix}
f_1^0 & f_1^1 & \cdots & f_1^{d-1}\\
f_2^0 & f_2^1 & \cdots & f_2^{d-1}\\
\vdots & \vdots & \ddots & \vdots\\
f_d^0 & f_d^1 & \cdots & f_d^{d-1}
\end{matrix}\right|\\
&=&\prod_{1\leq m<n\leq d}(f_m-f_n)^2\quad
\textrm{by the Vandermonde determinant formula}.
\end{eqnarray*}
On the other hand, since $f_{\left[\begin{smallmatrix}1/2\\0\end{smallmatrix}\right]}(\tau)$
and $f_{\left[\begin{smallmatrix}1/4\\0\end{smallmatrix}\right]}(\tau)$ belong to $\mathcal{F}^1_4(\mathbb{Q})$,
each $(f_m-f_n)$ is of the form
\begin{equation*}
f_m-f_n=
\frac{f_{\left[\begin{smallmatrix}a/N\\b/N\end{smallmatrix}\right]}(\tau)
-f_{\left[\begin{smallmatrix}1/2\\0\end{smallmatrix}\right]}(\tau)}
{f_{\left[\begin{smallmatrix}1/4\\0\end{smallmatrix}\right]}(\tau)-
f_{\left[\begin{smallmatrix}1/2\\0\end{smallmatrix}\right]}(\tau)}
-\frac{f_{\left[\begin{smallmatrix}c/N\\d/N\end{smallmatrix}\right]}(\tau)
-f_{\left[\begin{smallmatrix}1/2\\0\end{smallmatrix}\right]}(\tau)}
{f_{\left[\begin{smallmatrix}1/4\\0\end{smallmatrix}\right]}(\tau)-
f_{\left[\begin{smallmatrix}1/2\\0\end{smallmatrix}\right]}(\tau)}
=\frac{f_{\left[\begin{smallmatrix}a/N\\b/N\end{smallmatrix}\right]}(\tau)
-f_{\left[\begin{smallmatrix}c/N\\d/N\end{smallmatrix}\right]}(\tau)}
{f_{\left[\begin{smallmatrix}1/4\\0\end{smallmatrix}\right]}(\tau)-
f_{\left[\begin{smallmatrix}1/2\\0\end{smallmatrix}\right]}(\tau)}
\end{equation*}
for some $\left[\begin{matrix}a/N\\b/N\end{matrix}\right],\left[\begin{matrix}c/N\\d/N\end{matrix}\right]\in
\mathcal{V}_N$ such that $\left[\begin{matrix}a/N\\b/N\end{matrix}\right]\not
\equiv\pm\left[\begin{matrix}c/N\\d/N\end{matrix}\right]\Mod{\mathbb{Z}^2}$.
Thus $\det(T)$ is a modular unit in $\mathcal{O}^1_4(\mathbb{Q})$ by Lemma \ref{difference} (i),
from which it follows by (\ref{eqn}) and (\ref{det-1}) that $h(\tau)\in\mathcal{O}^1_4(\mathbb{Q})[f^1_N(\tau)]$.
Therefore we achieve the inclusion $\mathcal{O}^1_N(\mathbb{Q})\subseteq\mathcal{O}^1_4(\mathbb{Q})[f^1_N(\tau)]$, as desired.
\end{proof}

\begin{corollary}\label{expression}
Let $N\geq8$ and $N\geq0\Mod{4}$.
For each $\mathbf{v}=\left[\begin{matrix}a/N\\b/N\end{matrix}\right]\in\mathcal{V}_N$, let
\begin{equation*}
r_\mathbf{v}(\tau)=\left(\frac{g_{(N/2)\mathbf{v}}(\tau)}{g_{(N/4)\mathbf{v}}(\tau)}\right)^8\quad\textrm{and}\quad
s_\mathbf{v}(\tau)=\frac{f_{\mathbf{v}}(\tau)
-f_{(N/2)\mathbf{v}}(\tau)}
{f_{(N/4)\mathbf{v}}(\tau)
-f_{(N/2)\mathbf{v}}(\tau)}.
\end{equation*}
Then, a family $\{h_\mathbf{v}(\tau)\}_{\mathbf{v}\in\mathcal{V}_N}$ of functions in $\mathcal{F}_N$
is a Fricke family of level $N$ if and only if
there is a polynomial $P(x,y,z,w)\in\mathbb{Q}[x,y,z,w]$ so that
\begin{equation*}
h_\mathbf{v}(\tau)=P(r_\mathbf{v}(\tau),r_\mathbf{v}(\tau)^{-1},(r_\mathbf{v}(\tau)-16)^{-1},
s_\mathbf{v}(\tau))\quad
\textrm{for all}~\mathbf{v}\in\mathcal{V}_N.
\end{equation*}
\end{corollary}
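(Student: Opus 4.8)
The plan is to combine the ring isomorphism $\phi_N\colon\mathrm{Fr}_N\xrightarrow{\sim}\mathcal{O}^1_N(\mathbb{Q})$ from Theorem \ref{isomorphic} with the explicit description of $\mathcal{O}^1_N(\mathbb{Q})$ from Theorem \ref{integral}, and then ``spread out'' the generators to the whole family using the Galois/transitivity machinery of Lemma \ref{transitive}. First I would observe that, for $\mathbf{v}=\left[\begin{smallmatrix}a/N\\b/N\end{smallmatrix}\right]\in\mathcal{V}_N$, the functions $r_\mathbf{v}(\tau)$ and $s_\mathbf{v}(\tau)$ are precisely the images of $g^1_4(\tau)$ and $f^1_N(\tau)$ under the Galois action. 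Indeed, if $\beta=\left[\begin{smallmatrix}a&b\\c&d\end{smallmatrix}\right]\in M_2(\mathbb{Z})$ has $\det(\beta)\equiv1\Mod N$, then ${^t}\beta\left[\begin{smallmatrix}1/N\\0\end{smallmatrix}\right]=\mathbf{v}$, while ${^t}\beta\left[\begin{smallmatrix}1/4\\0\end{smallmatrix}\right]=(N/4)\mathbf{v}$ and ${^t}\beta\left[\begin{smallmatrix}1/2\\0\end{smallmatrix}\right]=(N/2)\mathbf{v}$ (all modulo $\mathbb{Z}^2$). Applying (F3) to the Fricke families $\{f_\mathbf{v}\}$ and $\{g_\mathbf{v}^{12N}\}$ of Proposition \ref{typical}, together with Lemma \ref{difference}(iii) to control the $12$th root of unity appearing in $g^1_4(\tau)=g_{[1/4,0]}(\tau)^{-8}g_{[1/2,0]}(\tau)^{8}$ raised to a power divisible by $8$, we get $g^1_4(\tau)^\beta=r_\mathbf{v}(\tau)$ and $f^1_N(\tau)^\beta=s_\mathbf{v}(\tau)$.

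Next I would prove the ``only if'' direction. Given a Fricke family $\{h_\mathbf{v}(\tau)\}_\mathbf{v}\in\mathrm{Fr}_N$, Lemma \ref{special} gives $h_{[1/N,0]}(\tau)\in\mathcal{O}^1_N(\mathbb{Q})$, so by Theorem \ref{integral} there is $P(x,y,z,w)\in\mathbb{Q}[x,y,z,w]$ with
\begin{equation*}
h_{\left[\begin{smallmatrix}1/N\\0\end{smallmatrix}\right]}(\tau)=P\bigl(g^1_4(\tau),g^1_4(\tau)^{-1},(g^1_4(\tau)-16)^{-1},f^1_N(\tau)\bigr).
\end{equation*}
By Lemma \ref{transitive} every component $h_\mathbf{v}(\tau)$ equals $h_{[1/N,0]}(\tau)^\gamma$ for a suitable $\gamma\in\mathrm{GL}_2(\mathbb{Z}/N\mathbb{Z})/\{\pm I_2\}$; writing $\gamma$ via the decomposition $G_N\cdot\mathrm{SL}_2$, the $G_N$-part acts trivially since all of $g^1_4$, $f^1_N$ have rational Fourier coefficients (by (A1)), and the $\mathrm{SL}_2$-part may be taken to be some $\beta$ as above. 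Since $\gamma$ is a ring automorphism fixing $\mathbb{Q}$ and the rational number $16$, applying it to the displayed identity and using the first paragraph yields $h_\mathbf{v}(\tau)=P(r_\mathbf{v}(\tau),r_\mathbf{v}(\tau)^{-1},(r_\mathbf{v}(\tau)-16)^{-1},s_\mathbf{v}(\tau))$. One must be a little careful that $(g^1_4(\tau)-16)^{-1}$ really maps to $(r_\mathbf{v}(\tau)-16)^{-1}$: this is fine because Galois action commutes with taking inverses and fixes $16\in\mathbb{Q}$.

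For the ``if'' direction, suppose $h_\mathbf{v}(\tau)=P(r_\mathbf{v},r_\mathbf{v}^{-1},(r_\mathbf{v}-16)^{-1},s_\mathbf{v})$ for all $\mathbf{v}$. Since $r_\mathbf{v}(\tau)$ and $s_\mathbf{v}(\tau)$ arise from the Fricke families $\{g_\mathbf{v}^{12N}\}$ and $\{f_\mathbf{v}\}$ by the rational operations in (\ref{operations}) (and the constant family $16$ is trivially a Fricke family of level $N$, being fixed by all of $\mathrm{GL}_2(\mathbb{Z}/N\mathbb{Z})/\{\pm I_2\}$, with non-vanishing $r_\mathbf{v}$ and $r_\mathbf{v}-16$ by Lemma \ref{unitlevel4} transported through the Galois action), each of the families $\{r_\mathbf{v}\}$, $\{r_\mathbf{v}^{-1}\}$, $\{(r_\mathbf{v}-16)^{-1}\}$, $\{s_\mathbf{v}\}$ lies in the ring $\mathrm{Fr}_N$; hence so does the polynomial combination $\{h_\mathbf{v}\}$. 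The main obstacle I anticipate is bookkeeping rather than conceptual: verifying that the $8$th (equivalently $12N$th) powers kill the root-of-unity ambiguity in Lemma \ref{difference}(iii) so that $r_\mathbf{v}(\tau)$ is genuinely the $\beta$-transform of $g^1_4(\tau)$ with no stray scalar, and checking that $16$ (not merely $\{0,16\}$ as a set) is the correct fixed constant, so that $(g^1_4(\tau)-16)^{-1}$ transforms to $(r_\mathbf{v}(\tau)-16)^{-1}$ on the nose. Once these are confirmed, the corollary follows by transporting Theorem \ref{integral} along $\phi_N^{-1}$.
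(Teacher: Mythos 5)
Your proposal is correct and follows essentially the same route as the paper's proof: identify $r_\mathbf{v}(\tau)$ and $s_\mathbf{v}(\tau)$ as the Galois transforms $g^1_4(\tau)^\beta$ and $f^1_N(\tau)^\beta$ for a matrix $\beta$ with first row $(a,b)$ and determinant $\equiv1\Mod{N}$ (via Proposition \ref{typical}, (A1) and Lemma \ref{difference} (iii)), and then transport Theorem \ref{integral} through the isomorphism $\phi_N$ of Theorem \ref{isomorphic}, whose proof supplies exactly the assignment $\mathbf{v}\mapsto h^\beta$ you use. One small correction to your flagged ``obstacle'': the $12$th root of unity in Lemma \ref{difference} (iii) is \emph{not} killed by raising to the $8$th power (an $8$th power of a $12$th root of unity can be a nontrivial cube root of unity); it disappears because $\zeta$ depends only on $\gamma$ and therefore cancels in the ratio $g_{\left[\begin{smallmatrix}1/2\\0\end{smallmatrix}\right]}(\tau)^{8}\big/g_{\left[\begin{smallmatrix}1/4\\0\end{smallmatrix}\right]}(\tau)^{8}$.
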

\begin{proof}
Take any $\gamma=\left[\begin{matrix}a&b\\c&d\end{matrix}\right]\in M_2(\mathbb{Z})$
such that $\det(\gamma)\equiv1\Mod{N}$.
We find by (A1), Lemma \ref{difference} (iii) and Proposition \ref{typical} that
\begin{equation*}
r_\mathbf{v}(\tau)=g^1_4(\tau)^{\gamma}\quad\textrm{and}
\quad s_\mathbf{v}(\tau)=f^1_N(\tau)^{\gamma}.
\end{equation*}
Now, the result follows from Theorems \ref{isomorphic} (with its proof) and \ref{integral}.
\end{proof}

\section {Weak Fricke families}

Let $\mathbb{H}^\prime=\mathbb{H}\setminus\{\gamma(\zeta_3),\gamma(\zeta_4)~|~\gamma\in\mathrm{SL}_2(\mathbb{Z})\}$.
For a positive integer $N$,
let ${\mathcal{O}^1_N}^\prime(\mathbb{Q})$ be the ring of functions in $\mathcal{F}^1_N(\mathbb{Q})$
which are holomorphic on $\mathbb{H}^\prime$.

\begin{lemma}\label{jbijection}
$j(\tau)$ gives to rise a bijection $j(\tau):\mathrm{SL}_2(\mathbb{Z})\backslash\mathbb{H}\rightarrow\mathbb{C}$ such that
$j(\zeta_3)=0$ and $j(\zeta_4)=1728$.
\end{lemma}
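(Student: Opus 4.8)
The plan is to establish three facts about $j$ as a map on the quotient $\mathrm{SL}_2(\mathbb{Z})\backslash\mathbb{H}$: that it is well-defined, that it is surjective onto $\mathbb{C}$, and that it is injective, together with the special values $j(\zeta_3)=0$ and $j(\zeta_4)=1728$. All of these are classical, so the proof is mostly a matter of citing the right facts from the references already in use (in particular \cite{Lang} and \cite{Cox}). First I would note that $j(\tau)$ is a modular function for $\mathrm{SL}_2(\mathbb{Z})$ --- this is immediate from the fact, recalled after (\ref{j}), that $j$ generates the weakly holomorphic functions in $\mathcal{F}_1$, and $\mathcal{F}_1$ consists of functions invariant under $\mathrm{GL}_2(\mathbb{Z}/N\mathbb{Z})/\{\pm I_2\}$ in the sense of (\ref{Gal}); hence $j\circ\gamma=j$ for every $\gamma\in\mathrm{SL}_2(\mathbb{Z})$ and the map on $\mathrm{SL}_2(\mathbb{Z})\backslash\mathbb{H}$ is well-defined.

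Next I would address bijectivity. Surjectivity and injectivity of $j:\mathrm{SL}_2(\mathbb{Z})\backslash\mathbb{H}\to\mathbb{C}$ are exactly the statement that $j$ gives an analytic isomorphism from the modular curve $X(1)$ minus its cusp onto $\mathbb{C}$; this is standard and is proved, for instance, in \cite[Chapter 5]{Lang} or \cite[\S11]{Cox}. Concretely, one uses the fact that $j$ is holomorphic on $\mathbb{H}$ with a simple pole (in the $q$-parameter) at the cusp, so as a map $X(1)\to\mathbb{P}^1(\mathbb{C})$ it has degree one; a degree-one holomorphic map between compact Riemann surfaces is an isomorphism, and removing the cusp (which maps to $\infty$) gives the bijection $\mathrm{SL}_2(\mathbb{Z})\backslash\mathbb{H}\xrightarrow{\sim}\mathbb{C}$. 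Alternatively one argues directly: given $c\in\mathbb{C}$, the function $j(\tau)-c$ has, counted with multiplicity over a fundamental domain, exactly one zero (by the valence formula), giving both existence and, up to the elliptic fixed points, uniqueness of the preimage --- and the ramification at $\zeta_3$ (order $3$) and $\zeta_4$ (order $2$) is precisely what makes the count work out so that the induced map on the quotient is still a bijection.

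Finally, for the special values: $j(\zeta_4)=j(i)=1728$ because at $\tau=i$ the lattice $[\,i,1\,]$ has extra symmetry forcing $g_3(i)=0$, whence $j(i)=1728$ by the right-hand expression in (\ref{j}); and $j(\zeta_3)=0$ because at $\tau=\zeta_3=e^{2\pi i/3}$ the lattice has order-$6$ symmetry forcing $g_2(\zeta_3)=0$, whence $j(\zeta_3)=0$ from the middle expression in (\ref{j}). Both computations are recorded in \cite[Chapter 5]{Lang} or \cite[\S10--11]{Cox}, and the genuine content --- that these are the only elliptic points and that the degree of $j$ is one --- is entirely classical. I do not expect any real obstacle here; the only point requiring a little care is making sure the ramification bookkeeping at $\zeta_3$ and $\zeta_4$ is stated correctly so that injectivity on the quotient is not accidentally overclaimed, but since we are invoking the established isomorphism $X(1)\cong\mathbb{P}^1$ rather than reproving it, this is handled by the cited references.
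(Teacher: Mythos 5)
Your proposal is correct and amounts to the same thing the paper does: the paper's entire proof is a citation to \cite[Theorem 4 in Chapter 3]{Lang}, and you likewise defer the real content (the degree-one/valence-formula argument and the special values $g_2(\zeta_3)=0$, $g_3(i)=0$) to the classical references, merely sketching what those references prove. No gap; the extra exposition on ramification at the elliptic points is accurate but not needed beyond the citation.
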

\begin{proof}
See \cite[Theorem 4 in Chapter 3]{Lang}.
\end{proof}

\begin{theorem}\label{1weaken}
We have ${\mathcal{O}^1_1}^\prime(\mathbb{Q})=\mathbb{Q}[j(\tau),j(\tau)^{-1},(j(\tau)-1728)^{-1}]$.
\end{theorem}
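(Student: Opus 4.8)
The plan is to establish the two inclusions separately. The inclusion $\mathbb{Q}[j(\tau),j(\tau)^{-1},(j(\tau)-1728)^{-1}]\subseteq{\mathcal{O}^1_1}^\prime(\mathbb{Q})$ is the easy direction: $j(\tau)$ lies in $\mathcal{F}^1_1(\mathbb{Q})=\mathcal{F}_1$ (with rational Fourier coefficients), it is holomorphic on all of $\mathbb{H}$, and by Lemma \ref{jbijection} its only zero comes from $\tau\equiv\zeta_3$ while $j(\tau)-1728$ vanishes only at $\tau\equiv\zeta_4$; since $\zeta_3,\zeta_4\notin\mathbb{H}^\prime$, both $j(\tau)^{-1}$ and $(j(\tau)-1728)^{-1}$ are holomorphic on $\mathbb{H}^\prime$, and the ring they generate over $\mathbb{Q}$ therefore sits inside ${\mathcal{O}^1_1}^\prime(\mathbb{Q})$.

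For the reverse inclusion, I would take $h(\tau)\in{\mathcal{O}^1_1}^\prime(\mathbb{Q})$. Since $\mathcal{F}_1=\mathbb{Q}(j(\tau))$, write $h(\tau)=A(j(\tau))/B(j(\tau))$ with $A(x),B(x)\in\mathbb{Q}[x]$ coprime. The goal is to show every root of $B(x)$ is either $0$ or $1728$. Suppose $B(x)$ has a root $c\in\overline{\mathbb{Q}}$ with $c\neq0,1728$. By Lemma \ref{jbijection} the map $j$ is a bijection $\mathrm{SL}_2(\mathbb{Z})\backslash\mathbb{H}\to\mathbb{C}$, so there is $\tau_0\in\mathbb{H}$ with $j(\tau_0)=c$; moreover, because $c\neq0=j(\zeta_3)$ and $c\neq1728=j(\zeta_4)$, the point $\tau_0$ is not $\mathrm{SL}_2(\mathbb{Z})$-equivalent to $\zeta_3$ or $\zeta_4$, hence $\tau_0\in\mathbb{H}^\prime$. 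Then $B(j(\tau_0))=0$, while coprimality of $A$ and $B$ forces $A(j(\tau_0))\neq0$, so $h$ has a pole at $\tau_0\in\mathbb{H}^\prime$, contradicting $h\in{\mathcal{O}^1_1}^\prime(\mathbb{Q})$. Therefore $B(x)$ divides $x^a(x-1728)^b$ for some $a,b\geq0$, and a partial fraction decomposition (over $\mathbb{Q}$, since $0$ and $1728$ are rational) expresses $h(\tau)$ as a $\mathbb{Q}$-polynomial in $j(\tau)$, $j(\tau)^{-1}$ and $(j(\tau)-1728)^{-1}$, giving the inclusion ${\mathcal{O}^1_1}^\prime(\mathbb{Q})\subseteq\mathbb{Q}[j(\tau),j(\tau)^{-1},(j(\tau)-1728)^{-1}]$.

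This argument is essentially identical in structure to the proof of Theorem \ref{4structure}(i), with $j(\tau)$ in place of $g_{1,4}(\tau)$ and with the surjectivity-plus-fiber-description of Lemma \ref{jbijection} playing the role that Lemma \ref{unitlevel4} played there. I do not anticipate a serious obstacle; the only point requiring a little care is the passage from "$B(x)$ has no roots outside $\{0,1728\}$" to the explicit polynomial expression, i.e. checking that the partial fraction expansion of $A(x)/B(x)$ indeed has $\mathbb{Q}$-coefficients and no positive powers of $1/x$ or $1/(x-1728)$ beyond what the ring allows — but this is routine since we are simply decomposing a rational function whose denominator is a product of powers of $(x-0)$ and $(x-1728)$, both defined over $\mathbb{Q}$.
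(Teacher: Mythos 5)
Your proposal is correct and follows essentially the same route as the paper's own proof: both directions are handled identically, with Lemma \ref{jbijection} giving the easy inclusion and the coprime-polynomials contradiction argument showing that the denominator $B(x)$ can only vanish at $0$ and $1728$. The extra remarks on the partial fraction decomposition are a harmless elaboration of a step the paper leaves implicit.
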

\begin{proof}
By Lemma \ref{jbijection} we get the inclusion
${\mathcal{O}^1_1}^\prime(\mathbb{Q})\supseteq\mathbb{Q}[j(\tau),j(\tau)^{-1},(j(\tau)-1728)^{-1}]$.
\par
Now, let $h(\tau)\in{\mathcal{O}^1_1}^\prime(\mathbb{Q})$. Since $\mathcal{F}^1_1(\mathbb{Q})=\mathcal{F}_1=\mathbb{Q}(j(\tau))$, we have
$h(\tau)=A(j(\tau))/B(j(\tau))$ for some polynomials
$A(x),B(x)\in\mathbb{Q}[x]$ which are relatively prime.
Suppose that $B(x)$ has a zero $c\in\overline{\mathbb{Q}}$ not equal to $0$ and $1728$.
Since $j(\tau_0)=c$ for some $\tau_0\in\mathbb{H}^\prime$ by Lemma \ref{jbijection}, we have
$B(j(\tau_0))=0$. But since $A(x)$ is not divisible by $(x-c)$,
we see that $A(j(\tau_0))\neq0$,
which contradicts that $h(\tau)$ is holomorphic on $\mathbb{H}^\prime$. Thus we conclude that $0$ and $1728$ are the only possible zeros of $B(x)$. This proves the converse inclusion ${\mathcal{O}^1_1}^\prime(\mathbb{Q})\subseteq\mathbb{Q}[j(\tau),j(\tau)^{-1},(j(\tau)-1728)^{-1}]$.
\end{proof}

\begin{lemma}\label{1unit}
Modular units of level $1$ are exactly nonzero rational numbers.
\end{lemma}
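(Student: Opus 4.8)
The plan is to reduce the statement to the structure $\mathcal{F}_1=\mathbb{Q}(j(\tau))$ together with the surjectivity of $j$ on $\mathbb{H}$, in the same spirit as the arguments used in Theorems \ref{4structure} and \ref{1weaken}. First I would dispose of the easy implication: a nonzero rational number $c$ lies in $\mathbb{Q}\subseteq\mathbb{Q}[j(\tau)]$, and so does $c^{-1}$, so both are trivially integral over $\mathbb{Q}[j(\tau)]$; equivalently, a nonzero constant has neither zeros nor poles on $\mathbb{H}$, hence is a modular unit of level $1$ by the characterization recalled in the remark following Proposition \ref{typical}.

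For the reverse implication, let $h(\tau)$ be a modular unit of level $1$. Then $h(\tau)$ lies in $\mathcal{F}^1_1(\mathbb{Q})=\mathcal{F}_1=\mathbb{Q}(j(\tau))$ and, by that same characterization, has neither zeros nor poles on $\mathbb{H}$. Write $h(\tau)=A(j(\tau))/B(j(\tau))$ with $A(x),B(x)\in\mathbb{Q}[x]$ relatively prime. The key step is to show $B$ has no root: if $c\in\overline{\mathbb{Q}}$ were a zero of $B$, then by Lemma \ref{jbijection} there is $\tau_0\in\mathbb{H}$ with $j(\tau_0)=c$, at which $B(j(\tau_0))=0$ while $A(j(\tau_0))\neq0$ by coprimality, so $h(\tau)$ would have a pole at $\tau_0$, contradicting holomorphicity on $\mathbb{H}$. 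Hence $B$ is a nonzero constant. Applying the identical argument to $h(\tau)^{-1}$ — equivalently, using that $h(\tau)$ has no zeros on $\mathbb{H}$ — forces $A$ to be a nonzero constant as well, whence $h(\tau)=A/B\in\mathbb{Q}^\times$.

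I do not expect any genuine obstacle; the only point to keep straight is that a modular unit is permitted to have a zero or pole at the cusp $i\infty$, but this is irrelevant since the argument invokes only holomorphicity and non-vanishing on $\mathbb{H}$ itself, and once $A$ and $B$ are both constant the behavior at the cusp is automatically trivial. This lemma is the level-$1$ counterpart of Lemma \ref{unitlevel4}, and I anticipate it will be used to eliminate the constant ambiguity when piecing together the higher-level structure results.
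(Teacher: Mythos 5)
Your proof is correct and follows exactly the route the paper itself endorses: the paper's ``proof'' is a citation to \cite[Lemma 2.1]{K-S} together with the remark that one can alternatively argue via Lemma \ref{jbijection}, and your argument (writing $h=A(j)/B(j)$ with $A,B$ coprime, using the surjectivity of $j$ on $\mathbb{H}$ to force both $A$ and $B$ to be constant) is precisely that alternative, carried out correctly, including the application to $h^{-1}$ to handle zeros as well as poles.
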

\begin{proof}
See \cite[Lemma 2.1]{K-S}. One can also give a proof by using Lemma \ref{jbijection}.
\end{proof}

\begin{theorem}\label{weaken}
If $N\geq2$, then we obtain
\begin{equation*}
{\mathcal{O}^1_N}^\prime(\mathbb{Q})={\mathcal{O}^1_1}^\prime(\mathbb{Q})[f_{\left[\begin{smallmatrix}1/N\\0\end{smallmatrix}\right]}(\tau)]
=\mathbb{Q}[j(\tau),j(\tau)^{-1},(j(\tau)-1728)^{-1},f_{\left[\begin{smallmatrix}1/N\\0\end{smallmatrix}\right]}(\tau)].
\end{equation*}
\end{theorem}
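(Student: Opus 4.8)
The plan is to mirror the structure of the proof of Theorem \ref{integral}, but now over the base field $\mathcal{F}^1_1(\mathbb{Q})=\mathcal{F}_1=\mathbb{Q}(j(\tau))$ and with the weaker holomorphy condition. The inclusion ${\mathcal{O}^1_N}^\prime(\mathbb{Q})\supseteq{\mathcal{O}^1_1}^\prime(\mathbb{Q})[f_{\left[\begin{smallmatrix}1/N\\0\end{smallmatrix}\right]}(\tau)]$ is immediate: $f_{\left[\begin{smallmatrix}1/N\\0\end{smallmatrix}\right]}(\tau)$ lies in $\mathcal{F}^1_N(\mathbb{Q})$ by Lemma \ref{special} and is holomorphic on all of $\mathbb{H}$ (hence on $\mathbb{H}^\prime$) by (F1), and the elements of ${\mathcal{O}^1_1}^\prime(\mathbb{Q})$ are holomorphic on $\mathbb{H}^\prime$ by Theorem \ref{1weaken}, so any polynomial combination stays in ${\mathcal{O}^1_N}^\prime(\mathbb{Q})$. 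The equality with $\mathbb{Q}[j(\tau),j(\tau)^{-1},(j(\tau)-1728)^{-1},f_{\left[\begin{smallmatrix}1/N\\0\end{smallmatrix}\right]}(\tau)]$ then follows from Theorem \ref{1weaken}.

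For the reverse inclusion I would take $h(\tau)\in{\mathcal{O}^1_N}^\prime(\mathbb{Q})$. By Proposition \ref{F1N}, $\mathcal{F}^1_N(\mathbb{Q})=\mathcal{F}_1(f_{\left[\begin{smallmatrix}1/N\\0\end{smallmatrix}\right]}(\tau))$, so writing $f=f_{\left[\begin{smallmatrix}1/N\\0\end{smallmatrix}\right]}(\tau)$ and $d=[\mathcal{F}^1_N(\mathbb{Q}):\mathcal{F}_1]$, we can express $h=c_0+c_1f+\cdots+c_{d-1}f^{d-1}$ with $c_i\in\mathcal{F}_1$. Repeating the trace argument from Theorem \ref{integral} verbatim (multiply by $1,f,\ldots,f^{d-1}$, apply $\mathrm{Tr}=\mathrm{Tr}_{\mathcal{F}^1_N(\mathbb{Q})/\mathcal{F}_1}$, and invert the matrix $T$) gives $c_0,\ldots,c_{d-1}\in\det(T)^{-1}{\mathcal{O}^1_1}^\prime(\mathbb{Q})$, since each trace $\mathrm{Tr}(f^k)$ and $\mathrm{Tr}(f^kh)$ is a function in $\mathcal{F}_1$ that is holomorphic on $\mathbb{H}^\prime$ (a finite sum of $\mathrm{SL}_2(\mathbb{Z})$-translates of the $f$'s and of $h$, each holomorphic on $\mathbb{H}^\prime$). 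The Vandermonde computation shows $\det(T)=\prod_{1\leq m<n\leq d}(f_m-f_n)^2$, where the $f_m$ run over the Galois conjugates of $f$ over $\mathcal{F}_1$; these conjugates are precisely the $f_\mathbf{v}(\tau)$ for $\mathbf{v}$ ranging over a set of representatives, and Lemma \ref{difference} (i) guarantees $f_m\ne f_n$ for $m\ne n$, so each difference is a nonzero element of $\mathcal{F}_1$.

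The main obstacle—and the place where the argument genuinely differs from Theorem \ref{integral}—is controlling the zeros and poles of $\det(T)$ on $\mathbb{H}^\prime$. In Theorem \ref{integral} one worked in $\mathcal{O}^1_4(\mathbb{Q})$ and used that the relevant differences, after dividing by $f_{\left[\begin{smallmatrix}1/4\\0\end{smallmatrix}\right]}-f_{\left[\begin{smallmatrix}1/2\\0\end{smallmatrix}\right]}$, were modular units. Here instead I would invoke Lemma \ref{difference} (ii): for $\mathbf{u}\not\equiv\pm\mathbf{v}\Mod{\mathbb{Z}^2}$,
\begin{equation*}
(f_\mathbf{u}(\tau)-f_\mathbf{v}(\tau))^6=2^{12}3^6j(\tau)^2(j(\tau)-1728)^3\frac{g_{\mathbf{u}+\mathbf{v}}(\tau)^6g_{\mathbf{u}-\mathbf{v}}(\tau)^6}{g_\mathbf{u}(\tau)^{12}g_\mathbf{v}(\tau)^{12}},
\end{equation*}
and since the Siegel functions have neither zeros nor poles on $\mathbb{H}$, the right-hand side shows that $(f_\mathbf{u}-f_\mathbf{v})^6$—hence $\det(T)^3$ and $\det(T)$—has zeros and poles on $\mathbb{H}$ only at points where $j(\tau)\in\{0,1728\}$, i.e.\ only at the $\mathrm{SL}_2(\mathbb{Z})$-translates of $\zeta_3$ and $\zeta_4$ by Lemma \ref{jbijection}. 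Thus $\det(T)^{-1}$ is holomorphic on $\mathbb{H}^\prime$; more precisely $\det(T)$ is, up to a nonzero constant, a product of powers of $j(\tau)$ and $j(\tau)-1728$ times a modular unit of level $1$, and by Lemma \ref{1unit} that modular unit is a nonzero rational, so $\det(T)^{-1}\in\mathbb{Q}[j(\tau),j(\tau)^{-1},(j(\tau)-1728)^{-1}]={\mathcal{O}^1_1}^\prime(\mathbb{Q})$. Combining this with the displayed expression for $h$ and the containment $c_i\in\det(T)^{-1}{\mathcal{O}^1_1}^\prime(\mathbb{Q})\subseteq{\mathcal{O}^1_1}^\prime(\mathbb{Q})$ yields $h(\tau)\in{\mathcal{O}^1_1}^\prime(\mathbb{Q})[f_{\left[\begin{smallmatrix}1/N\\0\end{smallmatrix}\right]}(\tau)]$, completing the proof.
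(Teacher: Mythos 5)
Your proposal is correct and follows essentially the same route as the paper: the trace/Vandermonde argument over $\mathcal{F}_1=\mathbb{Q}(j(\tau))$, Lemma \ref{difference} (ii) to express $(f_m-f_n)^6$ in terms of $j(\tau)$, $j(\tau)-1728$ and Siegel functions, and Lemma \ref{1unit} to pin down the remaining modular unit factor of $\det(T)$ as a nonzero rational, so that $\det(T)^{-1}\in{\mathcal{O}^1_1}^\prime(\mathbb{Q})$. No substantive differences from the paper's proof.
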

\begin{proof}
Since $f_{\left[\begin{smallmatrix}1/N\\0\end{smallmatrix}\right]}(\tau)$ is weakly holomorphic,
we get the inclusion ${\mathcal{O}^1_N}^\prime(\mathbb{Q})\supseteq{\mathcal{O}^1_1}^\prime(\mathbb{Q})[f_{\left[\begin{smallmatrix}1/N\\0\end{smallmatrix}\right]}(\tau)]$.
\par
For the converse inclusion, let $h(\tau)\in{\mathcal{O}^1_N}^\prime(\mathbb{Q})$. Since $\mathcal{F}^1_N(\mathbb{Q})$
is generated by $f=f_{\left[\begin{smallmatrix}
1/N\\0\end{smallmatrix}\right]}(\tau)$ over $\mathcal{F}_1=\mathcal{F}^1_1(\mathbb{Q})$
by Proposition \ref{F1N}, we can write
\begin{equation}\label{linear}
h=c_0+c_1f+\cdots+c_{d-1}f^{d-1}
\end{equation}
where $d=[\mathcal{F}^1_N(\mathbb{Q}):\mathcal{F}^1_1(\mathbb{Q})]$ and
$c_0,c_1,\ldots,c_{d-1}\in\mathcal{F}^1_1(\mathbb{Q})$. If $f_1,f_2,\ldots,f_d$ denotes
all the Galois conjugates of $f$ over $\mathcal{F}^1_1(\mathbb{Q})$,
and $D=\prod_{1\leq m,n\leq d}(f_m-f_n)^2$, then one can show that
\begin{equation}\label{D-1}
c_0,c_1,\ldots,c_{d-1}\in D^{-1}{\mathcal{O}^1_1}^\prime(\mathbb{Q}).
\end{equation}
as in the proof of Theorem \ref{integral}. By Lemma \ref{difference} (ii) we see that each
$(f_m-f_n)^6$ is of the form
\begin{equation*}
(f_m-f_n)^6=
2^{12}3^6j(\tau)^2(j(\tau)-1728)^3
\frac{g_{\mathbf{u}+\mathbf{v}}(\tau)^6
g_{\mathbf{u}-\mathbf{v}}(\tau)^6}{g_\mathbf{u}(\tau)^{12}g_\mathbf{v}(\tau)^{12}}
\end{equation*}
for some $\mathbf{u},\mathbf{v}\in\mathcal{V}_N$ such that $\mathbf{u}
\not\equiv\pm\mathbf{v}\Mod{\mathbb{Z}^2}$.
It follows by Lemma \ref{1unit} that
\begin{equation*}
D=cj(\tau)^{d(d-1)/3}(j(\tau)-1728)^{d(d-1)/2}\quad\textrm{for some nonzero}~c\in\mathbb{C}.
\end{equation*}
Since $D\in\mathcal{F}^1_1(\mathbb{Q})=\mathbb{Q}(j(\tau))$, we must have
$d(d-1)/3\in\mathbb{Z}$ and $c\in\mathbb{Q}$. Therefore we achieve by Theorem \ref{1weaken},
(\ref{linear}) and (\ref{D-1}) that
$h(\tau)\in{\mathcal{O}^1_1}^\prime(\mathbb{Q})[f_{\left[\begin{smallmatrix}1/N\\0\end{smallmatrix}\right]}(\tau)]$. Hence the inclusion
${\mathcal{O}^1_N}^\prime(\mathbb{Q})\subseteq{\mathcal{O}^1_1}^\prime(\mathbb{Q})[f_{\left[\begin{smallmatrix}1/N\\0\end{smallmatrix}\right]}(\tau)]$
also holds.
\end{proof}

\begin{remark}\label{weakenremark}
For $N\geq2$,
let $\mathrm{Fr}^\prime_N$ be the set of \textit{weak} Fricke families of level $N$, namely,
the families $\{h_\mathbf{v}(\tau)\}_{\mathbf{v}\in\mathcal{V}_N}$ of functions
in $\mathcal{F}_N$ satisfying (F1), (F2$^\prime$) and (F3).
It is also a ring under the operations as in (\ref{operations}).
In a similar way to the proof of Theorem \ref{isomorphic},
one can readily show that $\mathrm{Fr}^\prime_N$ is isomorphic to
${\mathcal{O}^1_N}^\prime(\mathbb{Q})$. Therefore, we deduce by Theorem \ref{weaken} that
a family $\{h_\mathbf{v}(\tau)\}_{\mathbf{v}\in\mathcal{V}_N}$ of functions in $\mathcal{F}_N$
is a weak Fricke family of level $N$ if and only if there is a polynomial $P(x,y,z,w)\in\mathbb{Q}[x,y,z,w]$ so that
\begin{equation*}
h_\mathbf{v}(\tau)=P(j(\tau),j(\tau)^{-1},(j(\tau)-1728)^{-1},f_{\left[\begin{smallmatrix}1/N\\0\end{smallmatrix}\right]}(\tau))
\quad\textrm{for all}~\mathbf{v}\in\mathcal{V}_N.
\end{equation*}
\end{remark}

\bibliographystyle{amsplain}

\begin{thebibliography}{10}

\bibitem {Cox} D. A. Cox, \textit{Primes of the form $x^2+ny^2$: Fermat, Class Field, and Complex Multiplication}, John Wiley $\&$ Sons, Inc., New York, 1989.

\bibitem {E-K-S} I. S. Eum, J. K. Koo and D. H. Shin,
\textit{Some appliations of modular units}, Proc. Edinb. Math. Soc., to appear,
http://arxiv.org/abs/1207.1609.

\bibitem {J-K-S} H. Y. Jung, J. K. Koo and D. H. Shin, \textit{Normal bases of ray class fields over imaginary quadratic fields}, Math. Zeit. 271 (2012), no. 1--2, 109--116.

\bibitem {J-K-S2} H. Y. Jung, J. K. Koo and D. H. Shin, \textit{On some Fricke families
and application to the Lang-Schertz conjecture},
Proc. Roy. Soc. Edinb. Sect. A, under revision, http://arxiv.org/abs/1405.5423.

\bibitem {K-K} C. H. Kim and J. K. Koo, \textit{Arithmetic of the modular function $j_{1,4}$}, Acta Arith. 84 (1998), no. 2, 129--143.

\bibitem {K-L} D. Kubert and S. Lang, \textit{Modular Units}, Grundlehren der mathematischen Wissenschaften 244, Spinger-Verlag, New York-Berlin, 1981.

\bibitem {K-S} J. K. Koo and D. H. Shin, \textit{On some arithmetic properties of Siegel functions}, Math. Zeit. 264 (2010), no. 1, 137--177.

\bibitem {Lang} S. Lang, \textit{Elliptic Functions}, 2nd edn, Grad. Texts in Math.
112, Spinger-Verlag, New York, 1987.

\bibitem {Shimura} G. Shimura, \textit{Introduction to the Arithmetic Theory of
Automorphic Functions}, Iwanami Shoten and Princeton University
Press, Princeton, NJ, 1971.

\end{thebibliography}

\address{
School of Mathematics \\
Korea Advanced Institute for Advanced Study\\
Seoul 02455\\
Republic of Korea} {zandc@kias.re.kr}
\address{
Department of Mathematics\\
Hankuk University of Foreign Studies\\
Yongin-si, Gyeonggi-do 17035\\
Republic of Korea} {dhshin@hufs.ac.kr}

\end{document}